\newsavebox\tmpbox
\numberwithin{equation}{section}
\theoremstyle{plain}
\newtheorem{theorem}{Theorem}[section]
\newtheorem*{theorem*}{Theorem}
\theoremstyle{plain}
\theoremstyle{plain}
\newtheorem{lemma}[theorem]{Lemma}
\theoremstyle{plain}
\theoremstyle{definition}
\theoremstyle{remark}
\newtheorem*{remark}{Remark}
\theoremstyle{remark}
\newtheorem*{remarks}{Remarks}
\theoremstyle{definition}
\newtheorem{example}{Example}[section]
\theoremstyle{plain}
\theoremstyle{definition}
\newcommand{\R}{\mathbb{R}}
\newcommand{\Z}{\mathbb{Z}}
\newcommand{\N}{\mathbb{N}}
\newcommand{\Lt}[1][d]{L^2(\R)}
\newcommand{\indicator}{\raisebox{2pt}{$\chi$}}
\renewcommand{\d}{\delta}
\newcommand{\pw}{PW_b^2(g, \R)}
\newcommand{\pwi}{PW_b^2(g, I)}
\newcommand{\owb}{orthonormal Wilson basis}
\newcommand{\vbw}{variable bandwidth}
\newcommand{\tf}{time-frequency}
\newcommandtwoopt{\xarrow}[2][0.5cm][0]{\mathrel{\rotatebox[origin=c]{#2}{$\xrightarrow{\rule{#1}{0pt}}$}}}
\DeclareMathOperator{\supp}{supp}
\DeclareMathOperator{\spn}{span}
\DeclareMathOperator{\Rp}{Re}
\DeclarePairedDelimiter\abs{\lvert}{\rvert}
\DeclarePairedDelimiter\norm{\lVert}{\rVert}
\DeclarePairedDelimiter\ceil{\lceil}{\rceil}
\DeclarePairedDelimiter\floor{\lfloor}{\rfloor}
\title[Variable bandwidth via Wilson bases]{Variable bandwidth via Wilson bases}
\author[B.\ Andreolli]{Beatrice Andreolli}
\address{Faculty of Mathematics, University of Vienna \newline Oskar-Morgenstern-Platz 1, 1090 Vienna, Austria}
\email{beatrice.andreolli@univie.ac.at}
\author[K.\ Gröchenig]{Karlheinz Gröchenig}
\email{karlheinz.groechenig@univie.ac.at}
\thanks{Beatrice Andreolli was supported by the Austrian Science Fund (FWF) projects P31887-N32 and P33217. Karlheinz Gröchenig was supported by the Austrian Science Fund (FWF) project P31887-N32.}
\keywords{nonuniform sampling, irregular sampling, sampling, reconstruction, frame, reproducing kernel Hilbert space, variable bandwidth spaces, density conditions}
\subjclass[2020]{41A15, 42C15, 42C40, 46B15, 46E22, 94A20}
\begin{document}
\maketitle
\begin{abstract}
    We introduce a new concept of variable bandwidth that is based on the frequency truncation of Wilson expansions. For this model we derive sampling theorems, a complete reconstruction of $f$ from its samples, and necessary density conditions for sampling. Numerical simulations support the interpretation of this model of variable bandwidth. In particular, chirps, as they arise in the description of gravitational waves, can be modeled in a space of variable bandwidth. 
\end{abstract}

\section{Introduction}
Variable bandwidth is a natural, but vexing concept in signal processing that has sparked the curiosity of engineers, physicists, and mathematicians. The difficulty of defining an adequate notion of \vbw\ lies in the nature of the Fourier transform and the uncertainty principle.  

A function $f \in L^2(\R)$ is said to have bandwidth $\Omega > 0$, if the number $\Omega$ is the maximal frequency contributing to $f$, in other words, its Fourier transform vanishes outside the interval $[-\Omega, \Omega]$. The associated function space is the Paley-Wiener space 
\begin{equation} \label{eq:pwc}
    PW_{\Omega} = \{f \in L^2(\R) : \supp(\hat{f})  \subseteq [-\Omega, \Omega]\}.
\end{equation}
By definition the bandwidth is a global quantity that requires the knowledge of $f$ on all of $\R $. In addition, functions in the Paley-Wiener space are entire functions of time, whereas real-world signals typically have a finite duration and also consist of bursts or pulses of varying duration, intensity, and frequency. Yet on an experiential level, \vbw\ obviously exists; for instance, in music we can perceive clearly the highest note, i.e., the local bandwidth, and its temporal changes, i.e., \vbw.

On a scientific level, there have been many attempts to define and work with \vbw, see our discussion below, but there is no consensus about the right definition.

In this work we propose a new approach to variable bandwidth  with
\tf\ methods. Time-frequency analysis offers  a natural toolbox for
variable bandwidth,  since its objective is roughly the analysis of
functions via joint \tf\ representations that combine the temporal
behavior $f$ and the frequency behavior $\hat{f}$ in one picture. Our
idea is to start with a \emph{discrete} \tf\ representation that
allows us to represent any $f$ as a series expansion of \tf\ atoms
with a clear localization both in time and in frequency.  
We may then prescribe a time-varying frequency truncation and in this way end up with a space of given \vbw. 

This idea goes back to Aceska and Feichtinger \cite{af11,af12} who used a time-varying frequency cut-off of the short-time Fourier transform. The resulting function spaces, however, coincide with the standard Sobolev spaces, and therefore lack the characteristic properties of Paley-Wiener space, and they admit neither a sampling theorem nor a critical density. 
Our  new  idea is to use a discrete version of the spaces of Aceska and Feichtinger and to replace the (continuous) short-time Fourier transform with a frequency truncation of a Wilson expansion. Our goal is then to show that the new spaces behave indeed like the classical Paley-Wiener space and prove sampling theorems and the existence of a critical sampling density.

The underlying \tf\ tool is an orthonormal basis of $L^2(\R )$ with a built-in \tf\ structure, a so-called Wilson basis. A Wilson basis is an orthonormal basis of $L^2(\R)$ of the form
\begin{align}\label{WB}
    &\psi_{n,l}(x)=
    \begin{cases}
        g(x-n), & n\in\Z, l = 0,  \\
        \frac{1}{\sqrt{2}}(e^{2 \pi i lx}+(-1)^{l+n}e^{-2 \pi i
          lx})g(x-n/2), & n,l\in \Z, l > 0. 
    \end{cases}
\end{align}
Daubechies, Jaffard and Journé \cite{djj91} showed that such
orthonormal bases exist and that the window function $g\in L^2(\R )$
may be chosen to be smooth and compactly supported. Then a basis
element $\psi_{n,l}$ is supported in a neighborhood of time $n$ and
possesses a frequency in a band near $\pm l$.  

\vspace{2mm}

\textbf{A new model for variable bandwidth.}
Now let $\{\psi_{n,l}\}_{n \in \Z, l \in \N}$ be an orthonormal Wilson basis as in \eqref{WB} and let $b : \Z \to \N$ be a bounded positive sequence representing the time-varying frequency truncation. 
Then we define a \vbw\ Paley-Wiener-type space as follows:
\begin{equation}\label{def:PW}
    PW_b^2(g,\R) = \Big \{f \in L^2(\R) : f = \sum_{n \in \Z}\sum_{l=0}^{b(n)}c_{n,l}\psi_{n,l}, \text{ } c \in \ell^2(\Z \times \mathbb{N}) \Big\}.
  \end{equation}
This is our main object of investigation. 
If $g$ has compact support, then the restriction $f|_{[n/2-1/2,n/2+1/2]}$ is approximately a trigonometric polynomial of degree $b(n)$, i.e., the maximal frequency on $[n/2-1/2,n/2+1/2]$ is expected to be $b(n)$.
 While this intuition is basically correct, we will need to refine it because the interval $[n/2-1/2,n/2+1/2]$ contains several translates $\psi _{n',l}$ for $n'\neq n$.

Wilson bases have proved to be very useful in time-frequency analysis since they provide a way to overcome the barrier posed by the Balian-Low theorem~\cite{bhw98}. 
Wilson bases are a popular tool in time-frequency analysis and signal processing to study function spaces time-frequency localization and non-linear approximation \cite{fgw92,gs00}. More importantly, Wilson bases have gained visibility and popularity for a general scientific audience because of their role in the detection of gravitational waves in 2015~\cite{aaa16, aa16, cjm16}.  
Gravitational waves are ripples in space-time caused by violent cosmic events like colliding black holes and supernovae and were predicted by Albert Einstein~\cite{Einst1,Einst2}. For the collision and merging of two neutron stars, the analytic form of the gravitational wave was calculated
in~\cite{bl14,bdiww95,bd99} to be of the form
\begin{equation} \label{eq:chi}
     s(t)=c|t-t_0|^{-\frac{1}{4}}\cos(\omega|t-t_0|^{\frac{5}{8}} +\varphi), \quad \text{ for } t<t_0,
\end{equation}
where $c$ is a constant, $\omega \gg 1$ and $t_0$ is the time of coalescence. Since $\omega|t-t_0|^{\frac{5}{8}} = \omega|t-t_0|^{-\frac{3}{8}} \, \abs{t-t_0}$, the quantity $\omega|t-t_0|^{-\frac{3}{8}}$ represents the (maximal)  frequency at time $t$. In the language of signal processing such a function is called  a \emph{chirp}, and is considered a prototype of a signal with \vbw. 

Essentially, the detection of gravitational waves amounts to the extraction of a chirp buried inside a noisy signal. 
In 2012, Necula, Klimenko, and Mitselmakher \cite{kmn12} proposed to use Wilson bases for gravitational wave detection and developed an algorithm that maps time series data to the time-frequency plane using a Wilson basis. This algorithm was successfully applied to gravitational data to detect the merger of two black holes and other astrophysical data.  

Let us emphasize that in this application the shape of the local bandwidth $ \omega|t-t_0|^{-\frac{3}{8}}$ is known in advance and can thus be incorporated into a suitable signal space. We argue and will demonstrate numerically  that a \vbw\ Paley-Wiener space $\pw$ is such a suitable space. 

\vspace{2mm}

\textbf{Results.} 
We will show that $\pw $ behaves in many ways like the classical Paley-Wiener space \eqref{eq:pwc}. In analogy to the Shannon-Whittaker-Kotelnikov sampling theorem and its variations, we will prove a sampling theorem for the variable bandwidth space $PW_b^2(g,\R)$. Secondly, we will determine a critical sampling density for $\pw $ that represents a necessary condition for sampling and can be seen as an information-theoretic quantity associated to
$\pw$. For the classical Paley-Wiener space, these questions have been studied for many decades~\cite{se04,un00} with the deepest results due to Beurling~\cite{be89} and Landau~\cite{la67}. These results have set the standard of how to approach sampling in a new function space.

In particular, sampling theorems are nowadays formulated as sampling inequalities. For this, we recall that a set $\Lambda \subset \R$ is called a set of (stable) sampling for $PW_b^2(g,\R)$, if there exist constants $A,B >0$ such that 
\begin{equation}\label{ineq:ss}
    A\norm{f}_2^2 \leq \sum_{\lambda \in \Lambda}|f(\lambda)|^2 \leq
    B\norm{f}_2^2 \qquad \text{ for all } f \in PW_b^2(g
    ,\R)\, .
\end{equation}
The sampling inequality \eqref{ineq:ss} indicates that the entire information carried by the function is captured by the evaluation of the function at the samples. Furthermore, a sampling entails automatically several reconstruction methods of a function from its samples. 

We start with  a necessary density condition for sampling in $\pw$ that is valid for a very general class of windows.   
\begin{theorem}[Necessary density condition for sampling] \label{tint1}
    Assume that  $g\in \mathcal{C}(\R)$ is a  real-valued, even function such that $|g(x)| \leq C(1+|x|)^{-1-\epsilon}$ for some $C >0$ and $\epsilon > 0$, and that $g$ generates an orthonormal Wilson basis.
    Let $b=(b(n))_{n\in \Z} $ be a bounded sequence such that
    $b(n)\geq 1$ for every $n \in \Z$ and let $\pw$ be the  associated variable bandwidth space.
    If  $\Lambda \subseteq \R$ is a set of sampling for $PW_b^2(g,\R)$, then
    \begin{equation}
        D^-(\Lambda) \coloneqq \liminf_{r \to \infty}\inf_{x \in \R} \frac{\#(\Lambda \cap [x-r,x+r])}{2r} \geq 1 + \overline{b}
    \end{equation}
    where $\overline{b} =\liminf_{r \to \infty} \inf_{x \in \R}\frac{1}{2r} \sum_{\frac{n}{2} \in [x-r,x+r]} b(n)$.
\end{theorem}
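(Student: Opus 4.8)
The plan is to prove the density bound by Landau's method: compare the number of sampling points in a long interval with the ``dimension'' of the functions in $\pw$ that are concentrated there, the latter being governed by the trace of a localization operator. First I would record that the decay hypothesis $|g(x)|\le C(1+|x|)^{-1-\epsilon}$ makes $\sum_{n\in\Z}\sum_{l=0}^{b(n)}|\psi_{n,l}(x)|^2$ converge locally uniformly, so point evaluations are bounded and $\pw$ is a reproducing kernel Hilbert space with kernel
\begin{equation}
K(x,y)=\sum_{n\in\Z}\sum_{l=0}^{b(n)}\psi_{n,l}(x)\overline{\psi_{n,l}(y)}.
\end{equation}
The sampling inequality \eqref{ineq:ss} then says exactly that $\{K(\cdot,\lambda)\}_{\lambda\in\Lambda}$ is a frame for $\pw$ with bounds $A,B$, since $f(\lambda)=\langle f,K(\cdot,\lambda)\rangle$.

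Second I would compute the local trace. Let $P$ be the orthogonal projection of $\Lt$ onto $\pw$, and for $I=[x-r,x+r]$ let $T_I=P\,M_{\chi_I}\,P$ be the associated concentration operator, where $M_{\chi_I}$ is multiplication by the indicator of $I$. Then $0\le T_I\le\mathrm{Id}$ and
\begin{equation}
\mathrm{tr}\,T_I=\int_I K(y,y)\,dy=\sum_{n\in\Z}\sum_{l=0}^{b(n)}\int_I|\psi_{n,l}(y)|^2\,dy.
\end{equation}
Each $\psi_{n,l}$ is $L^2$-normalized and, by the decay of $g$, concentrated near its time center, namely $n$ when $l=0$ and $n/2$ when $l\ge1$. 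Hence each integral is within a small error of $1$ when the center lies well inside $I$ and within a small error of $0$ otherwise; controlling the $O(1)$ layer of centers near $\partial I$ by the decay of $g$, uniformly in $x$, I would obtain
\begin{equation}
\frac{1}{2r}\,\mathrm{tr}\,T_{[x-r,x+r]}=1+\frac{1}{2r}\sum_{\frac{n}{2}\in[x-r,x+r]}b(n)+o(1),
\end{equation}
with $o(1)$ uniform in $x$; the summand $1$ is the contribution of the $l=0$ atoms (one per integer) and the remaining sum is that of the $l\ge1$ atoms. Applying $\liminf_{r\to\infty}\inf_{x}$ identifies the limiting trace density with $1+\overline b$.

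Third comes Landau's comparison. Order the eigenvalues $1\ge\lambda_1(I)\ge\lambda_2(I)\ge\cdots\ge0$ of $T_I$ and let $V_I$ be the span of the eigenfunctions with eigenvalue $\ge1-\delta$; the estimate $\int_I|f|^2\ge(1-\delta)\norm{f}_2^2$ shows every $f\in V_I$ is $L^2$-concentrated on $I$. I would then show that for a slightly enlarged interval $\tilde I=[x-r-\rho,x+r+\rho]$ the restriction map $f\mapsto(f(\lambda))_{\lambda\in\Lambda\cap\tilde I}$ is injective on $V_I$: if $f\in V_I$ vanished at every $\lambda\in\Lambda\cap\tilde I$, then $\sum_{\lambda\in\Lambda}|f(\lambda)|^2=\sum_{\lambda\notin\tilde I}|\langle f,K(\cdot,\lambda)\rangle|^2$, which would be strictly smaller than $A\norm{f}_2^2$, because $f$ is concentrated on $I$ while the reproducing kernels $K(\cdot,\lambda)$ with $\lambda\notin\tilde I$ live at distance $\ge\rho$ from $I$; this contradicts the lower sampling bound unless $f=0$. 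Injectivity gives $\#(\Lambda\cap\tilde I)\ge\dim V_I$, and $\dim V_I$ differs from $\mathrm{tr}\,T_I$ only by the number of eigenvalues in the plunge region $(\delta,1-\delta)$. Dividing by $2r$, letting $r\to\infty$ with $\rho/r\to0$, and taking $\inf_x$ then yields $D^-(\Lambda)\ge1+\overline b$.

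The main obstacle is making the comparison sharp rather than merely qualitative, which rests on two estimates: bounding the plunge region $\#\{k:\delta<\lambda_k(I)<1-\delta\}$ by $o(r)$ uniformly in the interval center, and quantifying the concentration and kernel decay that force the restriction map to be injective. Both reduce to good off-diagonal decay of $K(x,y)$ as $|x-y|\to\infty$, which must be extracted from the polynomial decay of $g$ together with the almost-orthogonality of the Wilson atoms. The merely bounded, non-constant profile $b$ requires all of these estimates to hold uniformly in $x$, which is precisely why the statement is phrased with $\inf_{x\in\R}$ inside a $\liminf$.
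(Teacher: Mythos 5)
Your first two steps coincide with the paper's proof: Lemma~\ref{PW_RKHS} establishes exactly the reproducing kernel you write down, and the paper's Step~3 is your trace computation (the paper only needs the lower bound $\liminf_{r\to\infty}\inf_x\frac{1}{2r}\int_{x-r}^{x+r}k(y,y)\,dy\ge 1+\overline b$, not the two-sided asymptotics you claim, but your version is also true under the decay hypothesis). Where you diverge is that the paper then invokes the general density theorem for reproducing kernel Hilbert spaces from \cite{fghkr17} (Theorem~\ref{NDC:Thm}) as a black box, whereas you re-derive its content by hand: concentration operator, plunge-region bound via $\mathrm{tr}(T_I-T_I^2)=\int_I\int_{I^c}|K(x,y)|^2\,dx\,dy=O(1)$, and injectivity of the restriction map on the near-eigenspace. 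That route is viable (it is essentially how such theorems are proved), but it means you are re-proving the hard cited result rather than verifying its hypotheses, which is all the paper actually has to do.

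The genuine gap is that you never address the uniform positivity of the diagonal, $\inf_{x\in\R}k(x,x)>0$. This is hypothesis (i) of Theorem~\ref{NDC:Thm}, and it is also indispensable in your own Landau argument: to make ``strictly smaller than $A\norm{f}_2^2$'' rigorous in the injectivity step you must bound $\sum_{\lambda\in\Lambda\setminus\tilde I}|\langle f,K(\cdot,\lambda)\rangle|^2$, which requires that $\Lambda$ have finite upper density (so that the tail sum $\sum_{\lambda\notin\tilde I}(1+d(\lambda,I))^{-1-2\epsilon}$ converges and tends to $0$ as $\rho\to\infty$); finite upper density is deduced from the upper sampling bound in \eqref{ineq:ss} applied to kernels $k_x$, and that deduction needs $|k(\lambda,x)|$ bounded below for $\lambda$ near $x$, i.e.\ precisely the diagonal lower bound plus continuity. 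Moreover, this positivity is \emph{not} automatic and is exactly where the hypothesis $b(n)\ge 1$ enters --- your proposal never uses $b(n)\ge1$, which is a warning sign. Indeed, for $b\equiv 0$ and the window $g(x)=\sqrt2\cos(\pi x)\chi_{[-1/2,1/2]}(x)$ one has $k(1/2,1/2)=\sum_n|g(1/2-n)|^2=0$, so the bound fails without it. The paper's Step~1 devotes a genuine argument to this point, using $b(n)\ge1$, the evenness of $g$, periodicity of $k(x,x)$, and the Wilson orthonormality condition (iii) of Theorem~\ref{WB_char} evaluated at the critical points $x=1/4,\,3/4$. Secondarily, the off-diagonal decay of $K$ that you defer (``must be extracted'') is not a consequence of almost-orthogonality; the paper obtains it concretely from $|\psi_{n,l}(x)|\le\sqrt2\,|g(x-n/2)|$, the bound $b(n)\le B$, and the convolution estimate for the weight $(1+|x|)^{-1-\epsilon}$ in \cite{gr04}. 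Without these two kernel estimates your comparison scheme cannot be closed.
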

In this result, $\overline{b}$ represents a sort of average bandwidth. The necessary condition says that on average are needed at least $1+\bar{b}$ samples to recover $f\in \pw $.

On the constructive side, our main theorem provides a sufficient condition for sampling for $PW_b^2(g,\R)$. In the absence of more structure, such as analyticity, we use a local maximum gap between consecutive samples to formulate the sufficient condition. 
\begin{theorem}[Sufficient condition for sampling] \label{tint2}
    Let $g \in \mathcal{C}^1(\R)$ be  a real-valued, even function with $\supp(g) \subseteq [-m, m]$ that generates an orthonormal Wilson basis. 
    Let $PW_b^2(g,\R)$ be the space of variable bandwidth defined in \eqref{def:PW}. Label $\Lambda  \subseteq \R$ as
    \begin{equation}
        \Lambda \coloneqq \Big \{x_{\frac{k}{2},j} = k/2 +\eta_{\frac{k}{2},j} : k \in \Z, \text{ } \eta_{\frac{k}{2},j}\in [0,1/2), \text{ } j=1, \dots , j_{\max}(\tfrac{k}{2})  \Big\}.
    \end{equation}
    Let  $\delta_\frac{k}{2} = \max_{j=1,\dots, j_{\max}(\frac{k}{2})}(x_{\frac{k}{2},j+1}-x_{\frac{k}{2},j})$ be the maximum gap on $[\tfrac{k}{2}, \tfrac{k+1}{2}]$ and assume that $x_{\frac{k}{2},1}-\tfrac{k}{2} \leq \frac{1}{2}\delta_{\frac{k}{2}}$ and $\frac{k+1}{2}-x_{\frac{k}{2},j_{\max(\frac{k}{2})}} \leq \frac{1}{2}\delta_{\frac{k}{2}}$.
    Define 
    \begin{equation}
        D \coloneqq (4m)\cdot\max\{2\pi\norm{g}_\infty,\norm{g'}_\infty\}
    \end{equation}
    and 
    \begin{equation}
        \mu_{\frac{k}{2}} \coloneqq \max_{n =(k-2m,k+2m+1)\cap \Z} \Big(b(n)+1\Big).
    \end{equation}
    If for every $ k \in \Z$
    \begin{equation}
        \d_{\frac{k}{2}} < \frac{\pi}{\mu_{\frac{k}{2}} D},
    \end{equation}
    then $f \in PW_b^2(g,\R)$ can be reconstructed completely from  the samples in $\Lambda$.

    If in addition  $\Lambda$  is separated, then $\Lambda$ is a set of sampling for $\pw $ and a sampling inequality \eqref{ineq:ss} holds.
\end{theorem}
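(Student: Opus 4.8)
The plan is to reduce both assertions to a single contraction estimate $\norm{f-Af}_2 \le \gamma\norm{f}_2$ with $\gamma<1$, where $A$ is a piecewise-constant quasi-interpolation operator built from the samples. Concretely, on each interval $I_k=[k/2,(k+1)/2]$ I would attach to every sample $x_{k/2,j}$ its Voronoi cell $T_{k/2,j}\subseteq I_k$ (the points of $I_k$ closest to $x_{k/2,j}$), whose length is at most $\delta_{k/2}$ by the gap bound together with the two endpoint conditions, and set $Af=\sum_{k,j}f(x_{k/2,j})\,\chi_{T_{k/2,j}}$. Since $Af$ depends only on the sample values and $\norm{f-PAf}_2=\norm{P(f-Af)}_2\le\norm{f-Af}_2\le\gamma\norm{f}_2$, where $P$ is the orthogonal projection onto $\pw$, the operator $PA$ is invertible on $\pw$ with Neumann series $f=\sum_{j\ge 0}(I-PA)^j(PAf)$; this yields the complete reconstruction. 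The lower sampling bound then follows from $\norm{Af}_2\ge(1-\gamma)\norm{f}_2$ together with $\norm{Af}_2^2=\sum_{k,j}\abs{T_{k/2,j}}\,\abs{f(x_{k/2,j})}^2$ and $\abs{T_{k/2,j}}\le\delta_{k/2}$.

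So everything rests on the contraction estimate, whose first ingredient is an elementary oscillation bound: for $x\in T_{k/2,j}$ the fundamental theorem of calculus and Cauchy--Schwarz give $\abs{f(x)-f(x_{k/2,j})}^2\le\delta_{k/2}\int_{T_{k/2,j}}\abs{f'}^2$, and summing over the cells of $I_k$ yields
\[
\norm{f-Af}_{L^2(I_k)}\le\delta_{k/2}\,\norm{f'}_{L^2(I_k)} .
\]
This step is routine and is where the maximum-gap hypothesis enters.

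The heart of the matter is a local Bernstein-type inequality bounding $\norm{f'}_{L^2(I_k)}$ by $\norm{f}$. Because $\supp g\subseteq[-m,m]$, on $I_k$ only the atoms $\psi_{n,l}$ with $k-2m<n<k+2m+1$ are active---exactly the index range defining $\mu_{k/2}$---so $f|_{I_k}=\sum_n g(\cdot-n/2)\,P_n$ is a finite sum of at most $4m$ windowed trigonometric polynomials $P_n$ of degree $b(n)\le\mu_{k/2}-1$. Differentiating produces the two terms $g'(\cdot-n/2)P_n$ and $g(\cdot-n/2)P_n'$; bounding the first through $\norm{g'}_\infty$ and, via Bernstein's inequality for trigonometric polynomials, the second through $2\pi\mu_{k/2}\norm{g}_\infty\norm{P_n}_\infty$ produces a factor $\max\{2\pi\norm{g}_\infty,\norm{g'}_\infty\}$, a factor $\mu_{k/2}$ from the degree, and a factor $4m$ from the number of active translates. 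Passing from the local polynomials $P_n$ back to $f$ (using orthonormality of the basis to identify the $\norm{P_n}$-type quantities with the relevant coefficients) should give
\[
\norm{f'}_{L^2(I_k)}\le\frac{\mu_{k/2}D}{\pi}\,\norm{f}_{L^2(J_k)},
\]
where $J_k$ is the enlargement of $I_k$ carrying the active atoms and $D=(4m)\max\{2\pi\norm{g}_\infty,\norm{g'}_\infty\}$.

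Combining the two displays and using that the intervals $I_k$ tile $\R$ gives $\norm{f-Af}_2^2=\sum_k\norm{f-Af}_{L^2(I_k)}^2\le\sum_k(\delta_{k/2}\mu_{k/2}D/\pi)^2\norm{f}_{L^2(J_k)}^2$, and the hypothesis $\delta_{k/2}<\pi/(\mu_{k/2}D)$ makes every prefactor strictly below $1$. The main obstacle I anticipate lies in the final bookkeeping: the neighborhoods $J_k$ overlap, so a naive summation overcounts $\norm{f}_2^2$ by the overlap multiplicity and could destroy the strict contraction. I would circumvent this by carrying out the summation in the coefficient domain---bounding $\norm{f'}_{L^2(I_k)}^2$ by the energy $\sum_{(n,l)}\abs{c_{n,l}}^2$ of the finitely many active atoms and interchanging the order of summation, so that each coefficient $c_{n,l}$ is charged only to the $\approx 4m$ intervals meeting $\supp\psi_{n,l}$---and arranging the constants so that this multiplicity is absorbed by the factor $4m$ already built into $D$, leaving $\gamma<1$. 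Finally, the upper sampling bound (when $\Lambda$ is separated) is comparatively standard: separation bounds the number of samples in each $I_k$, and a local Plancherel--Pólya estimate for the windowed trigonometric-polynomial structure bounds $\sum_{\lambda\in I_k}\abs{f(\lambda)}^2$ by $\norm{f}_{L^2(J_k)}^2$, which sums to $B\norm{f}_2^2$.
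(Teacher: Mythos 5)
Your proposal reproduces the paper's architecture almost exactly: the same Voronoi-cell step-function approximation followed by the orthogonal projection $P$, the same Neumann-series inversion, the same local representation $f=\sum_n P_n\, g(\cdot-n/2)$ with Bernstein's inequality applied to the active polynomials, and the same resolution of the overlap problem you flag at the end (your coefficient-domain bookkeeping is precisely the paper's Young-inequality step $\sum_k\bigl(\sum_{n\in F_k}\norm{P_n}_2\bigr)^2\le(4m)^2\norm{f}_2^2$, which is exactly why the factor $4m$ sits inside $D$), as well as the weighted sampling inequality with weights equal to the cell lengths. So the skeleton is sound. The genuine gap is quantitative and sits exactly where the constant $\pi$ in the hypothesis $\delta_{k/2}<\pi/(\mu_{k/2}D)$ must be produced.

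Your oscillation estimate via the fundamental theorem of calculus and Cauchy--Schwarz gives $\norm{f-Af}_{L^2(I_k)}\le \delta_{k/2}\norm{f'}_{L^2(I_k)}$ as you state it, and even the sharpest elementary version (splitting each cell at the sample point and integrating the distance factor) only yields the constant $\delta_{k/2}/(2\sqrt{2})$. The paper instead applies Wirtinger's inequality (Lemma~\ref{Wirt_ineq}), which, together with the endpoint hypotheses $x_{\frac{k}{2},1}-\tfrac{k}{2}\le\tfrac12\delta_{\frac{k}{2}}$ and $\tfrac{k+1}{2}-x_{\frac{k}{2},j_{\max}(\frac{k}{2})}\le\tfrac12\delta_{\frac{k}{2}}$, gives the constant $\delta_{k/2}/\pi$; this constant is sharp for a point-evaluation Poincar\'e inequality, so no elementary integration argument can recover it. You compensate by writing the local Bernstein-type bound as $\norm{f'}_{L^2(I_k)}\le\frac{\mu_{k/2}D}{\pi}\norm{f}_{L^2(J_k)}$, but that $1/\pi$ is unjustified: from $\norm{P_n'}_2\le 2\pi b(n)\norm{P_n}_2$, the sup-norm bounds on $g,g'$, and the count of roughly $4m$ active translates, what one actually obtains is $\norm{f'}_{L^2(I_k)}\le\frac{D}{4m}\mu_{k/2}\sum_{n\in F_k}\norm{P_n}_2$ --- the $2\pi$ of Bernstein is already consumed inside $D$ --- and converting $\sum_{n\in F_k}\norm{P_n}_2$ into $\norm{f}_{L^2(J_k)}$ by Cauchy--Schwarz and Bessel costs a further factor $\sqrt{4m+1}$ rather than gaining a $1/\pi$. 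Tracing your chain with the correct constants (and the correct Young-type summation) yields only $\norm{f-Af}_2\le c\,\sup_k(\delta_{k/2}\mu_{k/2})\,D\,\norm{f}_2$ with $c\ge 1/(2\sqrt2)$, so under the stated hypothesis the bound is $\ge\pi/(2\sqrt{2})>1$: no contraction, and the Neumann series need not converge. As written, your argument proves the theorem only under the stronger gap condition $\delta_{k/2}<2\sqrt{2}/(\mu_{k/2}D)$; to obtain the stated constant you must, as the paper does, replace the elementary oscillation bound by Wirtinger's inequality, which is precisely the role of the half-gap endpoint conditions in the statement.
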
 
The formulation confirms the initial intuition that the parameter $b(k)$ measures the local bandwidth of $f\in \pw$ near $k/2$. However, we learned that also the influence of the neighboring intervals must be included and that the parameter $\mu _{k/2}$ is a more accurate measure for the local bandwidth. For an illustrative example we refer to the discussion in Section \ref{Sec4}.

The second important aspect is the local nature of $\pw $. Since $\supp (g) $ is compact, the restriction $\pw \big| _I$ to an interval $I$ is in fact finite-dimensional. This is quite in contrast to the classical Paley-Wiener space of entire functions and greatly facilitates the numerical treatment of sampling in $\pw $.

For the proof we will resort to the adaptive weights method described in \cite{fg94,gr92}.  

\vspace{2mm}

\textbf{Other concepts of \vbw\ in the literature.} In the literature one finds several possible approaches to variable bandwidth. While they all have a common goal, the mathematical methods are entirely disjoint from our approach. 

One of the first attempts to capture  variable bandwidth in signal processing involves warping functions \cite{bpp98, cpl85, ho68, sg12, wo07}. 
Given a homeomorphism $\gamma: \R \to \R$,  a so-called warping function, a function $f$ possesses variable bandwidth with respect to $\gamma$ if  $f = g \circ \gamma$ for a bandlimited function $g \in L^2(\R)$ with $\supp(\hat{g}) \subseteq [-\Omega, \Omega]$. The derivative $1/\gamma'(\gamma^{-1}(x))$ of the warping function is interpreted as the local bandwidth of $f$ at $x$.

A recent definition of variable bandwidth is based on the spectral theory of a Sturm-Liouville operator $A_pf = - \frac{d}{dx}(p(x)\frac{d}{dx})f$ on $L^2(\R)$, where $p > 0$ is a strictly positive function \cite{cgk23, gk17, gk21}. Denote by $c_{\Lambda}(A_p)$ the spectral projection of $A_p$ corresponding to $A_p$ in $\Lambda \subset \R^+$, then the range of this projection-valued measure $c_\Lambda(A_p)$ is the space of functions of variable bandwidth with spectral set in $\Lambda$. For this space of variable bandwidth were proved a (nonuniform) sampling theorem and necessary density conditions.  
The main insight is that, for a spectrum $\Lambda = [0, \Omega]$, a function of variable bandwidth behaves like a bandlimited function with local bandwidth $(\Omega/p(x))^{1/2}$ in a neighborhood of $x \in \R$. 

Kempf and his collaborators used a procedural idea of variable bandwidth~\cite{hk10, ke00, km08}. A formal definition  concept was then  presented in~\cite{mk18}. Essentially, they generated a symmetric operator and a related reproducing kernel Hilbert space based on a sequence of sampling points. This space was designated as a variable bandwidth space. By parameterizing the self-adjoint extensions of the operator, they obtained several sampling theorems at the critical density.

\vspace{2mm}

\textbf{Numerical simulations.} Our theoretical analysis of sampling in $\pw $ is complemented by detailed numerical simulations. We will consider several numerical scenarios and classes of sampling sets. We study (i) the reconstruction of a compactly supported function in $\pw $, and (ii) the approximation of the restriction to an interval of an arbitrary function. In this case boundary effects occur and have to be dealt with. As a highlight, we reconstruct a chirp similar to \eqref{eq:chi} from its samples.

\vspace{2mm}

\textbf{Outlook.} Wilson bases are by no means the only discrete \tf\ representations that are suitable for our approach. Alternative definitions of \vbw\ can be defined by using a variable frequency truncation of  a Gabor frame expansion or by replacing a Wilson basis with local Fourier bases. The achievable results are similar in spirit, but these alternative set-ups also pose different challenges (for instance, well localized Gabor frames are not linearly independent). See~\cite{an24} for more information. At this time, it is not clear which version of \vbw\ is the most convenient one.

Finally, from the point of view of signal processing and data
analysis, it is essential to determine the local bandwidth parameters
$b(n)$ based on given samples. This problem is one of  parameter
estimation and  is usually treated separately and needs to be dealt with even for the classical Paley-Wiener space. 

The paper is organized as follows: Section \ref{Sec2} introduces the main tools required, namely the  characterization of a Wilson basis, a version of the necessary density condition theorem for reproducing kernel Hilbert spaces \cite{fghkr17}, the proof that $PW_b^2(g,\R)$ is a reproducing kernel Hilbert space, and several standard inequalities.
Section \ref{Sec3} contains the proof of the necessary density condition (Theorem~\ref{tint1}) for sampling for the space of variable bandwidth $PW_b^2(g, \R)$. In Section \ref{Sec4} we provide the proof of the sufficient condition for sampling in $PW_b^2(g, \R)$ (Theorem~\ref{tint2}). In Section \ref{Sec5} we discuss the numerical implementation of the \vbw\ model and present the numerical reconstruction results. 

\section{Preliminaries}\label{Sec2}
In this section we recall the basic properties of a Wilson basis and the tools we need to investigate sampling.

By defining the usual translation and modulation operator as $T_nf(t) = f(t - n)$ and\\ 
$M_l f(t) = f(t)e^{2 \pi i l t}$, we can rewrite the entire collection of functions that forms a Wilson basis \eqref{WB} as
\begin{equation}\label{WB:2ndef}
    \tilde{\psi}_{n,l} = d_l (M_l + (-1)^{l+n}M_{-l})T_{\frac{n}{2}}g, \quad (n,l) \in \Z^2, l\geq0,
\end{equation}
where $d_0= \frac{1}{2}$ and $d_l = \frac{1}{\sqrt{2}}$, $l\geq 1$. In this form $\tilde{\psi}_{n,l} = \psi _{n,l}$ for $l\geq 1$ and $n\in \Z $, and $\tilde{\psi}_{2n,0} = \psi _{n,0} = T_ng$, and $\tilde{\psi}_{2n+1,0}= 0$. 

The following result from \cite[Corollary 8.5.4]{gr01} gives a characterization of Wilson bases. 
\begin{theorem}\label{WB_char}
    Assume $g \in L^2(\R)$ to be even and real-valued. Then the following are equivalent:
    \begin{enumerate}[label=(\roman*)]
        \item The Wilson system $\{\psi_{n,l}\}_{n \in \Z, l \in \mathbb{N}}$ as defined by \eqref{WB} is an orthonormal basis for $L^2(\R)$.
        \item $\norm{g}_2 = 1$ and 
        the Gabor system $\{T_{n/2}M_lg : n,l \in \Z \}$
        is a tight frame for $L^2(\R)$, i.e., there exists a constant
        $A>0$ such that for all $f \in L^2(\R)$ 
        \begin{equation}\label{eq:TightFrame}
        \sum_{n \in \Z}\sum_{l \in \Z}|\langle f, T_{n/2}M_lg\rangle|^2 = A\norm{f}_2^2.
\end{equation}
        \item $\sum_{n \in \Z}g(x-k-\frac{n}{2})\overline{g}(x-\frac{n}{2}) = 2 \delta_{k0}$ a.e.
    \end{enumerate}
\end{theorem}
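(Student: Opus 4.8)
The plan is to route all three conditions through the Gabor system $\G = \{M_l T_{n/2}g : n,l\in\Z\}$ attached to the lattice $\tfrac12\Z\times\Z$ and its frame operator $S = \sum_{n,l\in\Z}\langle\,\cdot\,,M_lT_{n/2}g\rangle M_lT_{n/2}g$. This lattice has density $2$, so $\G$ is twofold redundant; the Wilson construction \eqref{WB:2ndef} folds the frequency indices $+l$ and $-l$ into a single atom, halving the redundancy, and one expects a tight frame of redundancy $2$ to fold into an orthonormal basis precisely when the signs $(-1)^{l+n}$ are chosen to cancel the resulting cross terms. Both the basis property (i) and the identity (iii) are in fact statements about $S$, so the strategy is to express each condition in terms of $S$ and then compare.

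First I would settle (ii)$\Leftrightarrow$(iii). By the Walnut representation of the Gabor frame operator on $\tfrac12\Z\times\Z$, $S$ is a shift-type operator $Sf = \sum_{k}G_k\cdot T_k f$ whose symbols are the correlation functions $G_k(x) = \sum_{n}g(x-\tfrac n2)\overline{g(x-\tfrac n2 - k)}$. Hence $\G$ is a tight frame, $S = A\,\mathrm{Id}$, if and only if $G_k = A\,\delta_{k0}$ a.e., which after conjugating (recall $g$ is real) and relabeling $k\mapsto -k$ is exactly the family of identities in (iii), with frame bound $A=2$. The normalization is automatic: integrating $G_0 = 2$ over one period $[0,\tfrac12)$ gives $\|g\|_2^2 = 1$, matching (ii). Here the correlations are assumed to converge, which is part of (iii) and is in any case guaranteed by decay of $g$.

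The heart is (ii)$\Rightarrow$(i), which I would split into orthonormality and completeness. For orthonormality, expand $\langle\psi_{n,l},\psi_{n',l'}\rangle$ using \eqref{WB:2ndef} into a combination of Gabor inner products $\langle M_aT_{n/2}g,M_bT_{n'/2}g\rangle$ with $a\in\{\pm l\}$, $b\in\{\pm l'\}$; each such inner product is a single Fourier coefficient of a correlation $\sum_m g(\cdot-\tfrac n2)\overline{g(\cdot-\tfrac{n'}2)}$ and is therefore pinned down by the tight-frame relations, i.e. by (iii). Using the evenness and realness of $g$ together with the sign pattern $(-1)^{l+n}$, all off-diagonal terms cancel in pairs and the diagonal term equals $1$; the row $l=0$ (and the mixed terms with $l=0<l'$), where the odd translates drop out and $d_0=\tfrac12$, must be checked as a separate case. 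For completeness I would pass to the Zak transform $Zf(t,\nu)=\sum_k f(t-k)e^{2\pi i k\nu}$, under which integer modulations act by multiplication and half-integer time shifts act by a shift of $t$ by $\tfrac12$; the tight-frame condition becomes a pointwise constraint on $Zg$, and the folded atoms $\psi_{n,l}$ are carried to an exponential/cosine orthonormal basis of the Zak domain $L^2([0,1)^2)$ that is complete exactly under that constraint. This returns completeness of $\{\psi_{n,l}\}$ in $L^2(\R)$.

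Finally (i)$\Rightarrow$(ii) is the reverse of the previous paragraph: the folding map between Gabor and Wilson indices is invertible, so the orthonormal-basis property forces the same Zak-domain constraint on $Zg$, hence $S=2\,\mathrm{Id}$ and the tight-frame property; combined with (ii)$\Leftrightarrow$(iii) this closes the cycle. The main obstacle is completeness in (ii)$\Rightarrow$(i): because the fold collapses the twofold-redundant $\G$ to redundancy one, completeness cannot be inherited from the (complete) Gabor frame, and it is precisely here that the Zak transform, the evenness of $g$, and the exact sign convention $(-1)^{l+n}$ are indispensable. The secondary nuisance is the consistent treatment of the $l=0$ row, where the odd translates vanish.
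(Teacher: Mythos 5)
First, a point of calibration: the paper itself gives no proof of this theorem; it is quoted verbatim from \cite{gr01} (Corollary 8.5.4). So your proposal can only be compared with that cited argument. Your first step, (ii)$\Leftrightarrow$(iii), is correct and is the standard route: the Walnut/Wexler--Raz description of the frame operator of $\{M_lT_{n/2}g\}$ shows that tightness is equivalent to the vanishing of the correlation symbols, the bound is forced to be $A=2$, and integrating the zeroth symbol over a period gives $\norm{g}_2=1$. The genuine gaps are in the implications involving (i). Your claim that each Gabor inner product $\langle M_aT_{n/2}g,\,M_bT_{n'/2}g\rangle$ is ``pinned down by the tight-frame relations'' is false. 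Write $P_k(x)=\sum_{m\in\Z}g(x-k-m)\overline{g}(x-m)$ for the periodization over \emph{integer} shifts; condition (iii) says exactly $P_k(x)+P_k(x-\tfrac12)=2\delta_{k0}$, which constrains only the \emph{even} Fourier coefficients of $P_k$ (the $j$-th coefficient of the left side is $(1+(-1)^j)\widehat{P}_k(j)$), and it says nothing at all about the correlations $\sum_m g(x-k-\tfrac12-m)\overline{g}(x-m)$ that govern inner products between atoms whose time indices differ by an \emph{odd} integer. So most of the inner products you need are simply not determined by (iii); orthonormality holds only because the undetermined terms cancel against each other through the sign pattern $(-1)^{l+n}$ and the evenness of $g$. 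That cancellation is the entire content of the orthogonality step, and your proposal defers it.

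The second gap is worse: for completeness you assert that the folded atoms ``are carried to an exponential/cosine orthonormal basis of the Zak domain that is complete exactly under that constraint.'' This is a restatement of the theorem in Zak coordinates, not a proof of it, and it is precisely the hard part. Moreover, your structural claim that completeness ``cannot be inherited from the (complete) Gabor frame'' is the opposite of how the cited proof works. The argument behind \cite{gr01} first establishes, for any even real-valued $g$, the identity $\sum_{n\in\Z,\,l\ge 0}|\langle f,\psi_{n,l}\rangle|^2=\tfrac12\sum_{n,l\in\Z}|\langle f,M_lT_{n/2}g\rangle|^2$ for all $f\in L^2(\R)$, the cross terms cancelling identically by the symmetry of $g$ and the sign pattern; hence the Wilson system is a Parseval frame if and only if the Gabor system is tight with bound $2$. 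Tightness then gives $\norm{\psi_{n,l}}_2=1$ (via $\langle g,M_{2l}T_kg\rangle=\delta_{l0}\delta_{k0}$, i.e.\ exactly the even coefficients that (iii) does control), and a Parseval frame of unit vectors is automatically an orthonormal basis. Completeness thus comes for free from the Gabor frame, with no Zak transform, and the same identity read backwards yields (i)$\Rightarrow$(ii). Your route could in principle be completed, but as written it has holes at the two points where the theorem actually has content.
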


Orthonormal Wilson bases  exist in abundance and the window function $g$ may be chosen to be smooth with compact support or may be endowed with other desirable properties~\cite{auscher94,djj91}.

To study necessary conditions for sampling in $\pw $, we will make use of the general theory of sampling  in reproducing kernel Hilbert spaces and the necessary density conditions derived in~\cite{fghkr17}.  
We recall that a closed subspace $\mathcal{K} \subseteq L^2(\R)$ is a reproducing kernel Hilbert space if the point evaluation $f\mapsto f(x)$ is a bounded linear functional on $\mathcal{K}$ for all $x\in \R$. In this case there exists a family $k_x\in \mathcal{K}, x\in \R,$ such that $f(x) = \langle f, k_x \rangle$.  The function $k(x,y) = \overline{k_x(y)}$ is called the reproducing kernel of $\mathcal{K}$.
We formulate a simplified version of \cite[Thm.~1.1]{fghkr17} that is adapted to $\R $ with Lebesgue measure. 

\begin{theorem}[\cite{fghkr17}]\label{NDC:Thm}
    Let $\mathcal{K} \subseteq L^2(\R)$ be a reproducing kernel Hilbert space with a reproducing kernel $k(x,y)$ satisfying
    \begin{enumerate}[label=(\roman*)]
        \item $\inf_{x \in \R}k(x,x)>0$ and
        \item off-diagonal decay of the form $|k(x,y)| \leq
          N(1+|x-y|)^{-1-\epsilon}$ for all $x,y \in \R$ and some $\epsilon > 0$.
    \end{enumerate} 
    If for $\Lambda \subset \R$ there exist $A,B >0$ such that for every $f \in \mathcal{K}$
    \begin{equation}\label{samp_ineq}
        A\norm{f}^2 \leq \sum_{\lambda \in \Lambda} |f(\lambda)|^2 \leq B\norm{f}^2,
    \end{equation}
    then
    \begin{equation*}
        D^-(\Lambda) \coloneqq \liminf_{r \to \infty} \inf_{x \in \R} \frac{\#(\Lambda \cap [x-r,x+r])}{2r} \geq \liminf_{r \to \infty} \inf_{x \in \R} \frac{1}{2r}\int_{x-r}^{x+r}k(y,y)dy.
    \end{equation*}
\end{theorem}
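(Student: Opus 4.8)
The plan is to obtain this statement as the specialization to $(\R,|\cdot|,\,dx)$ of the general necessary density theorem for reproducing kernel Hilbert spaces, of which \cite[Thm.~1.1]{fghkr17} is the prototype. The work then splits into recalling the mechanism of the general proof and checking that hypotheses (i) and (ii) are exactly what that mechanism requires on $\R$, so that the abstract density reduces to the stated symmetric-interval average of $k(y,y)$.

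The engine is a comparison between a counting quantity and a trace. For an interval $Q=[x-r,x+r]$ I would introduce the localization (concentration) operator $L_Q\colon\mathcal K\to\mathcal K$, $L_Q f = P(\mathbf{1}_Q f)$, where $P$ denotes the orthogonal projection of $L^2(\R)$ onto $\mathcal K$. This operator is positive, self-adjoint and trace-class, with integral kernel $k(x,z)\mathbf{1}_Q(z)$, so that $\operatorname{tr}(L_Q)=\int_{x-r}^{x+r}k(y,y)\,dy$ is the integral of the diagonal of the reproducing kernel. This trace plays the role of a \emph{local dimension} of $\mathcal K$ over $Q$: through the eigenvalues of $L_Q$ clustering near $1$ it measures how many essentially independent functions of $\mathcal K$ can be concentrated on $Q$.

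The second ingredient is a Landau / Ramanathan--Steger type argument that turns the sampling inequality \eqref{samp_ineq} into a lower bound for the number of samples. Since $f(\lambda)=\langle f,k_\lambda\rangle$, inequality \eqref{samp_ineq} says precisely that $\{k_\lambda\}_{\lambda\in\Lambda}$ is a frame for $\mathcal K$, and from this one shows that the number of eigenvalues of $L_Q$ near $1$ cannot exceed the number of sampling points in a slightly enlarged interval. Combined with the trace identity this yields an estimate of the form $\#(\Lambda\cap[x-r',x+r'])\ge \operatorname{tr}(L_Q)-E(r)$ with a boundary error $E(r)$ that is $o(r)$ uniformly in $x$; dividing by $2r$, taking $\inf_{x}$ and then $\liminf_{r\to\infty}$ produces exactly $D^-(\Lambda)\ge \liminf_{r}\inf_{x}\tfrac1{2r}\int_{x-r}^{x+r}k(y,y)\,dy$.

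The roles of the hypotheses, and the main obstacle, appear in the last step. Condition (i), $\inf_{x}k(x,x)>0$, keeps the right-hand side bounded away from $0$ and, more importantly, forces the comparison to hold \emph{uniformly in the center} $x$, which is indispensable because the density involves $\inf_{x\in\R}$. Condition (ii), the polynomial off-diagonal decay $|k(x,y)|\le N(1+|x-y|)^{-1-\epsilon}$, is what makes $L_Q$ trace-class with a controllable off-diagonal part and what yields the homogeneous approximation property, so that the contribution of functions straddling $\partial Q$ is of lower order. The delicate point on which I expect to spend the most effort is showing that the decay rate $1+\epsilon$ already suffices to bound this boundary term by $o(r)$ \emph{uniformly in} $x$: one has to sum the tails of $k$ across the two edges of $Q$ and verify that, after division by $2r$ and the passage to $\inf_x$ and $\liminf_r$, the error does not survive. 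Once this uniform boundary estimate is secured, the normalization $|[x-r,x+r]|=2r$ makes the abstract density of \cite{fghkr17} coincide with $D^-(\Lambda)$, completing the specialization.
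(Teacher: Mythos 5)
Your proposal takes essentially the same route as the paper: the paper does not prove this statement at all, but presents it as a simplified version of \cite[Thm.~1.1]{fghkr17} specialized to $\R$ with Lebesgue measure, which is exactly your plan. Your sketch of the internal mechanism of that general theorem (the concentration operator $L_Q$ with $\operatorname{tr}(L_Q)=\int_Q k(y,y)\,dy$, the frame property of $\{k_\lambda\}_{\lambda\in\Lambda}$, and the Landau-type comparison of eigenvalues near $1$ with $\#(\Lambda\cap Q)$ up to an $o(r)$ boundary term controlled by the off-diagonal decay) is consistent with the proof given in that reference, so the only remaining check is that $(\R,|\cdot|,dx)$ satisfies the ambient regularity hypotheses of the general theorem, which it does.
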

We recall that the sampling inequality \eqref{samp_ineq} is equivalent of $\{k_\lambda\}_{\lambda \in \Lambda}$ being a frame for $\mathcal{K}$. The number $D^-(\Lambda) \coloneqq \liminf_{r \to \infty} \inf_{x \in \R} \frac{\#(\Lambda \cap [x-r,x+r])}{2r}$ is the Beurling density of $\Lambda$.

To apply Theorem~\ref{NDC:Thm}, we first show that the variable bandwidth space $\pw $  is a reproducing kernel Hilbert
space. 
\begin{lemma} \label{PW_RKHS}
    The space $PW_b^2(g, \R)$ of variable bandwidth defined in \eqref{def:PW} is a reproducing kernel Hilbert space with reproducing kernel
    \begin{equation}\label{eq:RepKer}
        k(x,y) = \sum_{n \in \Z}\sum_{l = 0}^{b(n)}\overline{\psi_{n,l}(y)}\psi_{n,l}(x).
    \end{equation}
\end{lemma}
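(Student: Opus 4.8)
The plan is to realize $\pw$ as the closed linear span of a subfamily of an orthonormal basis, to verify that point evaluation is a bounded functional, and then to read the reproducing kernel directly off the expansion coefficients. Throughout I use that $\{\psi_{n,l}\}_{n\in\Z,l\in\N}$ is an orthonormal basis of $L^2(\R)$, which holds by hypothesis via Theorem~\ref{WB_char}. First I would fix the index set $P = \{(n,l) : n \in \Z,\ 0 \le l \le b(n)\}$ and observe that $\{\psi_{n,l} : (n,l)\in P\}$ is then an orthonormal system whose closed linear span is exactly $\pw$. Concretely, the synthesis map $c \mapsto \sum_{(n,l)\in P} c_{n,l}\psi_{n,l}$ is a unitary from $\ell^2(P)$ onto $\pw$; in particular $\pw$ is a closed subspace of $L^2(\R)$ and every $f\in\pw$ has a unique coefficient sequence $c_{n,l} = \langle f, \psi_{n,l}\rangle$ with $\norm{f}_2^2 = \sum_{(n,l)\in P}\abs{c_{n,l}}^2$.

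The heart of the matter is the boundedness of the point evaluation $f \mapsto f(x)$. From the explicit form \eqref{WB} one has $\abs{\psi_{n,0}(x)} = \abs{g(x-n)}$, while for $l\ge 1$ the trigonometric factor has modulus at most $\sqrt2$, so $\abs{\psi_{n,l}(x)} \le \sqrt2\,\abs{g(x-n/2)}$. Writing $f = \sum_{(n,l)\in P} c_{n,l}\psi_{n,l}$, a Cauchy--Schwarz estimate shows that the defining series converges absolutely at every $x$ (so $f$ has a well-defined continuous representative) and yields
\begin{equation*}
  \abs{f(x)}^2 \le \norm{f}_2^2 \sum_{(n,l)\in P}\abs{\psi_{n,l}(x)}^2 \le \norm{f}_2^2\,\bigl(1+2\norm{b}_\infty\bigr)\sum_{n\in\Z}\abs{g(x-n/2)}^2,
\end{equation*}
where I have used that the integer translates are a subfamily of the half-integer translates. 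Hence point evaluation is bounded as soon as $S(x) := \sum_{n}\abs{g(x-n/2)}^2$ is finite, and this single analytic estimate is the only genuine obstacle: it is precisely where the decay of $g$ must be invoked. Under a bound $\abs{g(t)}\le C(1+\abs{t})^{-1-\epsilon}$ the sum $S(x)$ converges and is, moreover, bounded \emph{uniformly} in $x$; I would record this uniform bound since it will be reused when checking the hypotheses of Theorem~\ref{NDC:Thm}. Everything else in the argument is formal Hilbert-space bookkeeping.

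Finally I would identify the kernel. By the Riesz representation theorem there is a unique $k_x \in \pw$ with $f(x) = \langle f, k_x\rangle$ for every $f\in\pw$. The previous step gives $(\overline{\psi_{n,l}(x)})_{(n,l)\in P} \in \ell^2(P)$, so the function $k_x := \sum_{(n,l)\in P}\overline{\psi_{n,l}(x)}\,\psi_{n,l}$ lies in $\pw$, and for $f = \sum_{(n,l)\in P} c_{n,l}\psi_{n,l}$ orthonormality yields $\langle f, k_x\rangle = \sum_{(n,l)\in P} c_{n,l}\psi_{n,l}(x) = f(x)$. Thus $k_x$ is the reproducing element at $x$, and the kernel is $k(x,y) = \overline{k_x(y)} = \sum_{(n,l)\in P}\psi_{n,l}(x)\overline{\psi_{n,l}(y)}$, which is exactly \eqref{eq:RepKer}.
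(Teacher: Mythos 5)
Your overall architecture (closed span of an orthonormal subfamily, Cauchy--Schwarz for point evaluation, Riesz representation to identify the kernel) is exactly the paper's, and your identification of $k_x$ at the end is correct. The genuine gap is in the middle step, where you declare that the finiteness of $S(x)=\sum_{n}\abs{g(x-n/2)}^2$ ``is precisely where the decay of $g$ must be invoked'' and then assume $\abs{g(t)}\le C(1+\abs{t})^{-1-\epsilon}$. That hypothesis is not part of the lemma: Lemma~\ref{PW_RKHS} is stated for an arbitrary window $g$ generating an orthonormal Wilson basis, with no pointwise decay assumed, and the definition \eqref{def:PW} imposes none. As written, your argument therefore proves a weaker statement than the one asserted.

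The idea you are missing is that orthonormality of the Wilson system already controls $S(x)$ exactly: the characterization in Theorem~\ref{WB_char}, part (iii), with $k=0$ gives $\sum_{n\in\Z}\abs{g(x-\tfrac{n}{2})}^2=2$ a.e., with no analytic hypothesis on $g$ beyond the ONB property. This is what the paper uses: it yields $\sum_{n,l}\abs{\psi_{n,l}(x)}^2\le 2(1+2B)$ with $B=\sup_n b(n)$, hence $\abs{f(x)}\le\sqrt{2(1+2B)}\,\norm{f}_2$. So the ``single analytic obstacle'' you identify is not an obstacle at all; it is absorbed by the structural identity (iii), and the uniform bound on $S(x)$ that you wanted to record for later use in Theorem~\ref{NDC:Thm} comes for free as the constant $2$. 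Replacing your decay argument by this identity repairs the proof and recovers the lemma in its stated generality.
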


\begin{proof}
    Recall from the definition \eqref{def:PW} of $PW_b^2(g,\R)$ that $b(n) \leq B < \infty$ for every $n \in \Z$. 
    Since $\{\psi_{n,l}\}_{n \in \Z, l \in \mathbb{N}}$ is an orthonormal basis for $L^2(\R)$, then $PW_b^2(g,\R) = \spn\{
    \psi_{n,l}: n \in \Z, \text{ } l = 0,\dots,b(n) \}$ is a closed subspace of $L^2(\R)$.
    
    We  show that at each point in $\R$ the linear functional $f\mapsto f(x)$  is bounded i.e. there exists $M > 0$ such that $|f(x)| \leq M \norm{f}_2$ for every $f \in PW_b^2(g, \R)$ and $\forall x \in \R$.
    Let $x \in \R$ and $f = \sum_{n \in \Z}\sum_{l = 0}^{b(n)} \langle f, \psi_{n,l} \rangle \psi_{n,l} \in PW_b^2(g, \R)$, then applying the Cauchy-Schwarz inequality, we obtain
    \begin{align}\label{ineq:bound_f}
        |f(x)| 
        & = \Big|\sum_{n \in \Z}\sum_{l = 0}^{b(n)}\langle f,\psi_{n,l} \rangle \, \psi_{n,l}(x)\Big| \\
        & \leq \Big(\sum_{n \in \Z}\sum_{l = 0}^{b(n)} \Big|\langle  f , \psi_{n,l} \rangle\Big|^2\Big)^{1/2} \Big(\sum_{n \in \Z}\sum_{l = 0}^{b(n)} \Big|\psi_{n,l}(x)\Big|^2\Big)^{1/2}.
    \end{align}
    Since $\{\psi_{n,l}\}_{n \in \Z, l \leq b(n)}$ is an orthonormal basis for $PW_b^2(g,\R)$, we have that
    \begin{equation*}
        \Big(\sum_{n \in \Z}\sum_{l = 0}^{b(n)} \Big|\langle  f , \psi_{n,l} \rangle\Big|^2\Big)^{1/2}
        = \norm{f}_2.
    \end{equation*}
    To bound the second term of \eqref{ineq:bound_f}, we apply the definition of a Wilson basis  and we obtain
    \begin{equation*}
        \sum_{n \in \Z}\sum_{l = 0}^{b(n)} \Big|\psi_{n,l}(x)\Big|^2 
        = \sum_{n \in \Z} \Big|\psi_{n,0}(x)\Big|^2 + \sum_{n \in \Z}\sum_{l = 1}^{b(n)} \Big|\psi_{n,l}(x)\Big|^2
        = \sum_{n \in \Z} |g(x-n)|^2 + \sum_{n \in \Z}\sum_{l = 1}^{b(n)} \Big|\psi_{n,l}(x)\Big|^2.
    \end{equation*}
    Recall from the characterization of Wilson basis the equivalence (iii) of Theorem \ref{WB_char}. Then the first summand can be easily bounded as follows:
    \begin{equation}\label{IneqBoundIntTranslates}
        \sum_{n \in \Z} |g(x-n)|^2 
        \leq \sum_{n \in \Z} \Big|g\Big(x-\frac{n}{2}\Big)\Big|^2 = 2.
    \end{equation}
    The second summand can be bounded using the definition of the Wilson basis, the fact that $b(n) \leq B$ for every $n \in \Z$ and again equivalence (iii) of Theorem \ref{WB_char}. Therefore,
    \begin{equation}\label{IneqBoundHalfTranslates}
        \sum_{n \in \Z}\sum_{l = 1}^{b(n)} \Big|\psi_{n,l}(x)\Big|^2 
        \leq 2  \sum_{n \in \Z}\sum_{l = 1}^{b(n)} \Big|g\Big(x-\frac{n}{2}\Big)\Big|^2
        \leq 4B.
    \end{equation}
    Then $|f(x)| \leq \sqrt{2(1+2B)}\norm{f}_2$ for every $x \in \R$ and for every $f \in PW_b^2(g, \R)$. Hence,  $PW_b^2(g, \R)$ is a reproducing kernel Hilbert space.
    
    Moreover, since $\{\psi_{n,l}\}_{n \in \Z, l \leq b(n)}$ is an orthonormal basis for $PW_b^2(g, \R)$, the reproducing kernel is given by
    \begin{equation}
        k(x,y) = \sum_{n \in \Z}\sum_{l = 0}^{b(n)}\overline{\psi_{n,l}(y)}\psi_{n,l}(x).
    \end{equation}
\end{proof}

The following versions of Wirtinger's inequality and Bernstein's inequality will be useful tools to prove the sufficient condition for sampling.

\begin{lemma}[Wirtinger's inequality]\label{Wirt_ineq}
    If $f,f'\in L^2(a,b)$, $a<c<b$, and $f(c)=0$, then
    \begin{equation}\label{WirtIneq}
        \int_a^b|f(x)|^2dx \leq \frac{4}{\pi^2} \max\{(b-c)^2,(c-a)^2\} \int_a^b|f'(x)|^2dx.
    \end{equation}
\end{lemma}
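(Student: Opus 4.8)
The plan is to reduce the two-sided statement to a one-sided Poincaré-type inequality on each of the subintervals $[a,c]$ and $[c,b]$, where the single vanishing condition $f(c)=0$ is available, and then recombine. As a preliminary I would note that $f,f'\in L^2(a,b)$ on a bounded interval forces $f\in H^1(a,b)$, so $f$ is absolutely continuous, the pointwise condition $f(c)=0$ is meaningful, and the fundamental theorem of calculus and the integrations by parts used below are all justified. (If $f$ is complex-valued one simply carries complex Fourier coefficients throughout; Parseval is unaffected.)

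The core is the one-sided inequality: if $g,g'\in L^2(0,L)$ and $g(0)=0$, then $\int_0^L|g|^2 \le \frac{4L^2}{\pi^2}\int_0^L|g'|^2$. I would prove it using the orthonormal system $e_k(x)=\sqrt{2/L}\,\sin\!\big(\tfrac{(2k-1)\pi x}{2L}\big)$, $k\ge 1$, which is the eigenbasis of $-\tfrac{d^2}{dx^2}$ on $(0,L)$ for the mixed conditions ``Dirichlet at $0$, Neumann at $L$,'' together with its companion cosine basis $\phi_k(x)=\sqrt{2/L}\,\cos\!\big(\tfrac{(2k-1)\pi x}{2L}\big)$; both are orthonormal bases of $L^2(0,L)$. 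Writing $c_k=\langle g,e_k\rangle$, integration by parts gives $\langle g',\phi_k\rangle=\tfrac{(2k-1)\pi}{2L}\,c_k$, where the boundary term $[\,g\phi_k\,]_0^L$ vanishes precisely because $g(0)=0$ and $\phi_k(L)=0$. Parseval in the $\phi_k$ basis then yields $\int_0^L|g'|^2=\sum_k\big(\tfrac{(2k-1)\pi}{2L}\big)^2|c_k|^2 \ge \big(\tfrac{\pi}{2L}\big)^2\sum_k|c_k|^2=\big(\tfrac{\pi}{2L}\big)^2\int_0^L|g|^2$, since the smallest frequency factor $\tfrac{\pi}{2L}$ occurs at $k=1$. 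This is the claim with the sharp constant $4L^2/\pi^2$.

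I would then apply this twice. On $[c,b]$, set $g(x)=f(c+x)$ with $L=b-c$, so that $g(0)=f(c)=0$ and $\int_c^b|f|^2\le \frac{4(b-c)^2}{\pi^2}\int_c^b|f'|^2$. On $[a,c]$, use the reflection $h(x)=f(c-x)$ with $L=c-a$, so that $h(0)=f(c)=0$ and $\int_a^c|f|^2\le \frac{4(c-a)^2}{\pi^2}\int_a^c|f'|^2$. Adding the two bounds and replacing both coefficients by $\frac{4}{\pi^2}\max\{(b-c)^2,(c-a)^2\}$ gives $\int_a^b|f|^2\le \frac{4}{\pi^2}\max\{(b-c)^2,(c-a)^2\}\int_a^b|f'|^2$, which is the stated inequality.

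The main obstacle, and really the only nontrivial point, is the sharp one-sided inequality: one must select the correct mixed Dirichlet–Neumann eigenbasis. Using the full-integer sine basis $\sin(k\pi x/L)$ would require $g(L)=0$ as well and would produce the smaller constant $L^2/\pi^2$, which is not available under a single endpoint condition; the half-integer frequencies are exactly what accommodate a free value of $g$ at $L$. The place where the hypothesis $g(0)=0$ is used is the cancellation of the boundary term in the integration by parts against $\phi_k$, which in turn is what makes the derivative series converge to $g'$ in $L^2$ with the stated coefficients. Once the completeness of the two eigenbases and this boundary cancellation are in hand, the remaining steps are routine bookkeeping.
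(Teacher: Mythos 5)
Your proof is correct, and it is genuinely more self-contained than what the paper does: the paper offers no proof at all, merely remarking that the lemma ``follows from [Hardy--Littlewood--P\'olya, p.~184] by applying a change of variables.'' The route implicit in that citation is exactly your reduction --- split $[a,b]$ at $c$, translate/reflect each piece to $[0,L]$ so that the vanishing point sits at the left endpoint, apply the one-sided inequality $\int_0^L|g|^2 \le \tfrac{4L^2}{\pi^2}\int_0^L|g'|^2$ with $g(0)=0$, and take the maximum of the two constants --- but the paper outsources the one-sided inequality itself to the classical reference, whereas you prove it. Your spectral argument is sound: the systems $e_k(x)=\sqrt{2/L}\,\sin\bigl(\tfrac{(2k-1)\pi x}{2L}\bigr)$ and $\phi_k(x)=\sqrt{2/L}\,\cos\bigl(\tfrac{(2k-1)\pi x}{2L}\bigr)$ are indeed orthonormal bases (eigenfunctions of regular mixed Sturm--Liouville problems), the integration by parts is legitimate for $g\in H^1$ against smooth $\phi_k$, the boundary term dies exactly because $g(0)=0$ and $\phi_k(L)=0$, and Parseval in both bases delivers the constant $\bigl(\tfrac{\pi}{2L}\bigr)^2$ from the lowest frequency. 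Your remark about why the full-integer sine basis is unusable --- it would smuggle in the unavailable condition $g(L)=0$ --- correctly identifies why the constant is $4L^2/\pi^2$ rather than $L^2/\pi^2$, and your construction also shows the constant is sharp (take $g(x)=\sin(\pi x/2L)$), which neither the statement nor the paper's citation makes explicit. What the paper's approach buys is brevity; what yours buys is a complete, verifiable argument with the sharpness of the constant visible.
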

Lemma \ref{Wirt_ineq} follows from \cite[p.184]{hlp52},  by applying a change of variables.
Wirtinger's inequality and its variations are often used in the proofs of sampling theorems, see \cite{asr17,gr92,wa87}.

Bernstein's inequality provides a bound for the derivative of a  trigonometric polynomial.
\begin{lemma}[Bernstein's inequality]\label{Bern_ineq}
    Let $P$ be a trigonometric polynomial of degree $n$ i.e. $P(x) = \sum_{|k| \leq n} c_ke^{2 \pi i k x}$.  Then
    \begin{equation}\label{eq:BernIneq}
        \norm{P'}_2 \leq 2 \pi n\norm{P}_2.
    \end{equation}
\end{lemma}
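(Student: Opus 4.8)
The plan is to exploit the orthogonality of the complex exponentials directly, so that Bernstein's inequality reduces to a one-line comparison of Fourier coefficients. First I would fix the interpretation of $\norm{\cdot}_2$ as the $L^2$ norm taken over a single period, say $[0,1]$, on which the system $\{e^{2\pi i k x}\}_{k \in \Z}$ is orthonormal. Differentiating $P$ term by term gives $P'(x) = 2\pi i \sum_{|k| \leq n} k\, c_k e^{2\pi i k x}$, which is again a trigonometric polynomial, now with Fourier coefficients $2\pi i k\, c_k$.

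Next I would invoke Parseval's identity on $[0,1]$ to rewrite both sides purely in terms of the coefficients: $\norm{P}_2^2 = \sum_{|k| \leq n} |c_k|^2$ and $\norm{P'}_2^2 = (2\pi)^2 \sum_{|k| \leq n} k^2 |c_k|^2$. The decisive (and essentially the only) estimate is the elementary bound $k^2 \leq n^2$, valid because every surviving frequency satisfies $|k| \leq n$. Substituting this yields $\norm{P'}_2^2 \leq (2\pi n)^2 \sum_{|k| \leq n} |c_k|^2 = (2\pi n)^2 \norm{P}_2^2$, and taking square roots gives the claim $\norm{P'}_2 \leq 2\pi n \norm{P}_2$.

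I do not expect a genuine obstacle here: the $L^2$ version of Bernstein's inequality is an immediate consequence of Plancherel's theorem, in sharp contrast to the sup-norm version, whose optimal constant requires a real argument. The only point demanding a moment's care is that the norms must be computed over one full period of the common period $1$ of the exponentials, since it is precisely the orthonormality of $\{e^{2\pi i k x}\}$ on that interval that converts differentiation into multiplication by $2\pi i k$ on the coefficient side and thereby makes the comparison exact.
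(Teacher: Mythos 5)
Your proof is correct, and it is the standard argument: the paper itself states this lemma without proof, treating the $L^2$ Bernstein inequality as a known fact, and the Parseval computation you give (differentiation becomes multiplication of the $k$-th coefficient by $2\pi i k$, then bound $k^2 \leq n^2$) is precisely the canonical justification the authors implicitly rely on. Your remark about computing the norm over one full period matches the paper's convention $\norm{P_k}_2^2 = \int_c^{c+1} \abs{P_k(x)}^2\,dx$, so there is nothing to add.
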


\section{Necessary density condition}\label{Sec3}
In this section, we will prove necessary density conditions for the spaces of variable bandwidth generated by a Wilson basis. Since $\pw$ is modeled after the Paley-Wiener space of bandlimited functions, we expect that there exists a critical density for $\pw $ as well. A critical density is a fundamental limitation for the reconstruction  of a function from its samples and quantifies how many samples are required. Furthermore, the  critical density is a benchmark against which one can measure constructive approaches to a reconstruction. 

We start with a simple result for Wilson bases with a compactly supported window. 

\begin{theorem}\label{NCW:compact}
    Let $g \in \mathcal{C}(\R)$ be a  real-valued, even function  with
    $\supp(g) 
    \subseteq [-m,m]$ that generates an \owb . Let $\Lambda \subseteq \R$ be a set of sampling
    for $PW_b^2(g, \R)$ defined in \eqref{def:PW}. Then  $\Lambda \cap (\alpha,\beta)$ contains at least
    $\ceil{\beta-\alpha-2m}+\sum_{n/2 \in [\alpha+m,\beta-m]}b(n)$ points.
\end{theorem}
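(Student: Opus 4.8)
The plan is to reduce the statement to a finite-dimensional linear-algebra argument, exploiting that a set of sampling is in particular a \emph{uniqueness set}: if $f \in \pw$ vanishes on all of $\Lambda$, then the lower inequality in \eqref{ineq:ss} forces $f=0$. The structural fact I would lean on is that, because $\supp(g) \subseteq [-m,m]$, every basis atom has compact support: $\psi_{n,0}=T_n g$ is supported in $[n-m,n+m]$, while for $l\geq 1$ the atom $\psi_{n,l}$ is a modulation of $T_{n/2}g$ and hence supported in $[n/2-m,n/2+m]$. Consequently the atoms whose support is contained in a fixed interval span a finite-dimensional subspace, and I would localize the sampling problem to $(\alpha,\beta)$ through this subspace.

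Concretely, I would first introduce
\[
    V \coloneqq \spn\{\psi_{n,l} : 0\leq l\leq b(n),\ \supp(\psi_{n,l})\subseteq(\alpha,\beta)\}\subseteq \pw ,
\]
a finite-dimensional subspace all of whose elements are supported in $(\alpha,\beta)$; since $g$ is continuous with $\supp(g)\subseteq[-m,m]$, each atom vanishes at the endpoints of its support, so ``supported in the closed cell'' and ``vanishing off $(\alpha,\beta)$'' coincide. For any $f\in V$ we then have $f(\lambda)=0$ for every $\lambda\in\Lambda\setminus(\alpha,\beta)$. Hence if such an $f$ also vanishes at every $\lambda\in\Lambda\cap(\alpha,\beta)$, it vanishes on all of $\Lambda$, and the uniqueness property gives $f=0$. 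In other words the evaluation map
\[
    V \longrightarrow \C^{\Lambda\cap(\alpha,\beta)},\qquad f \longmapsto \big(f(\lambda)\big)_{\lambda\in\Lambda\cap(\alpha,\beta)},
\]
is injective, which yields $\#\big(\Lambda\cap(\alpha,\beta)\big)\geq \dim V$.

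It then remains to count $\dim V$. Since the $\psi_{n,l}$ are orthonormal, $\dim V$ equals the number of admissible atoms supported in $(\alpha,\beta)$, and I would organize the count by the two centering conventions. The atoms with $l\geq 1$ are centered at the half-integer $n/2$ and supported in $[n/2-m,n/2+m]$; such an atom lies in $(\alpha,\beta)$ precisely when $n/2\in[\alpha+m,\beta-m]$, and for each such $n$ the admissible range $1\leq l\leq b(n)$ contributes $b(n)$ atoms, giving the total $\sum_{n/2\in[\alpha+m,\beta-m]}b(n)$. The atoms with $l=0$ are the integer translates $T_n g$, supported in $[n-m,n+m]$, which lie in $(\alpha,\beta)$ exactly when $n\in[\alpha+m,\beta-m]$; counting these integers accounts for the term $\lceil\beta-\alpha-2m\rceil$. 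Adding the two contributions produces the asserted lower bound.

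The step I expect to be the main obstacle is the precise endpoint bookkeeping in this last count, namely extracting exactly $\lceil\beta-\alpha-2m\rceil$ integer translates rather than a weaker floor-type estimate: the number of integers in the closed interval $[\alpha+m,\beta-m]$ can be as small as $\lfloor\beta-\alpha-2m\rfloor$, so reaching the stated ceiling requires a sharper treatment of the two boundary cells. I would therefore concentrate the effort on the endpoints, either through a careful placement argument or by exploiting cancellations among the boundary atoms to show that the space of elements of $\pw$ supported in $(\alpha,\beta)$ is genuinely larger than the span of the strictly interior atoms. By comparison, the uniqueness-and-dimension scaffolding above is routine.
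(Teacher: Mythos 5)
Your proposal is exactly the paper's proof: the paper introduces the local space $\pwi=\spn\{\psi_{n,l}:\supp(\psi_{n,l})\subseteq I\}$, observes that its elements vanish off $(\alpha,\beta)$ so that the lower sampling bound forces the evaluation map $f\mapsto (f(\lambda))_{\lambda\in\Lambda\cap(\alpha,\beta)}$ to be injective, and then counts dimensions --- precisely your scaffolding. The obstacle you single out at the end is real, but you should know that the paper does not overcome it either: its dimension formula \eqref{eq:dim_pwi} gives the $l=0$ contribution as $\#([\alpha+m,\beta-m]\cap\Z)$, and in the final display this count is silently replaced by $\ceil{\beta-\alpha-2m}$. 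That replacement is false for generic endpoints: with $\alpha+m=0.1$ and $\beta-m=1.9$ the interval $[\alpha+m,\beta-m]$ contains the single integer $1$, whereas $\ceil{1.8}=2$; in general one only has $\#([\alpha+m,\beta-m]\cap\Z)\geq\floor{\beta-\alpha-2m}$, and equality with the floor can occur whenever $\beta-\alpha-2m\notin\Z$. So what the dimension count actually yields --- in your write-up and in the paper's --- is the lower bound $\#([\alpha+m,\beta-m]\cap\Z)+\sum_{n/2\in[\alpha+m,\beta-m]}b(n)$, which can fall short of the stated ceiling by one. Your idea of exploiting cancellations among boundary atoms (the space of $f\in\pw$ vanishing off $(\alpha,\beta)$ can indeed be strictly larger than the span of the interior atoms) is a plausible route to recovering that unit, but neither you nor the paper carries it out; relative to the published argument, your proof has no gap.
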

To show the result we use an easy argument that involves counting dimensions as in \cite{ag00}.

\begin{proof}
    Let $I = (\alpha, \beta)$ such that $|I| \geq 2m$ and define the
    local variable bandwidth space $ PW_b^2(g,I)$  by 
    \begin{align}\label{def:PW_I}
      PW_b^2(g,I) &= \{\psi_{n,l} : \supp (\psi _{n,l})\subseteq I\} \\
      & =\spn\{\psi_{n,0} : n \in [\alpha+m,\beta-m]\}\\
        &\qquad \cup \spn \{\psi_{n,l} : n/2 \in [\alpha+m,\beta-m], \text{ } l = 1,\dots,b(n)\}.
    \end{align}
    We make the following three observations:
    \begin{enumerate}[(i)]
        \item $PW_b^2(g,I)$ is a subspace of $PW_b^2(g,\R)$ and thus the sampling inequalities hold for $PW_b^2(g,I)$.
        \item Since the Wilson basis functions are compactly supported,  every $f \in PW_b^2(g,I)$ has support in $(\alpha, \beta)$.
        \item $PW_b^2(g,I)$ is finite-dimensional,  and 
            \begin{equation}\label{eq:dim_pwi}
                \dim(PW_b^2(g,I)) = \#([\alpha+m,\beta-m] \cap \Z) + \sum_{n/2 \in [\alpha+m,\beta-m]} b(n).
            \end{equation}
    \end{enumerate}
    Since for $f \in PW_b^2(g,I)$, we have
    \begin{equation*}
        \sum_{\lambda \in \Lambda \cap (\alpha, \beta)} \abs{f(\lambda)}^2 
        = \sum_{\lambda \in \Lambda} \abs{f(\lambda)}^2 \geq A \norm{f}_2^2,
    \end{equation*}
    the map $f \in PW_b^2(g,I) \to \{f(\lambda)\}_{\lambda \in \Lambda \cap (\alpha, \beta)}$ is one-to-one. It follows that
    \begin{equation}
        \dim (PW_b^2(g,I)) = \ceil{\beta-\alpha -2m} + \sum_{n/2 \in [\alpha+m,\beta-m]} b(n) \leq \#(\Lambda \cap (\alpha, \beta)).
    \end{equation}
\end{proof}

Next we  state a more general theorem for a more general class of windows satisfying a mild decay condition.  

\begin{theorem}\label{thm:NDCgeneral}
    Let $g \in \mathcal{C}(\R)$ be a  real-valued, even function, such
    that $|g(x)| \leq C(1+|x|)^{-1-\epsilon}$ for some $C >0$ and
    $\epsilon > 0$ and assume that $g$  generates an \owb. 
    Let $\Lambda \subseteq \R$ be a set of sampling for $PW_b^2(g,\R)$ with $b(n) \geq 1$ for every $n \in \Z$.  Then
    \begin{equation*}
        \liminf_{r \to \infty}\inf_{x \in \R} \frac{\#(\Lambda \cap [x-r,x+r])}{2r} \geq 1 + \overline{b}
    \end{equation*}
    where $\overline{b} =\liminf_{r \to \infty} \inf_{x \in \R}\frac{1}{2r} \sum_{\frac{n}{2} \in [x-r,x+r]} b(n)$.
\end{theorem}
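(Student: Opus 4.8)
The plan is to deduce the statement from the general necessary density condition for reproducing kernel Hilbert spaces, Theorem~\ref{NDC:Thm}. By Lemma~\ref{PW_RKHS} the space $\pw$ is a reproducing kernel Hilbert space with kernel $k(x,y)=\sum_{n\in\Z}\sum_{l=0}^{b(n)}\overline{\psi_{n,l}(y)}\psi_{n,l}(x)$, so it suffices to verify the two hypotheses of Theorem~\ref{NDC:Thm} -- the uniform lower bound $\inf_x k(x,x)>0$ and the off-diagonal decay $|k(x,y)|\le N(1+|x-y|)^{-1-\epsilon}$ -- and then to estimate $\frac{1}{2r}\int_{x-r}^{x+r}k(y,y)\,dy$ from below by $1+\overline{b}$.

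For the diagonal I would first record that, for $l\ge 1$, a direct computation from \eqref{WB} gives $|\psi_{n,l}(x)|^2=\bigl(1+(-1)^{l+n}\cos(4\pi l x)\bigr)|g(x-n/2)|^2$, so every summand of $k(x,x)$ is nonnegative. Writing $P_0(x)=\sum_{m\in\Z}|g(x-m)|^2$ and $P_1(x)=\sum_{m\in\Z}|g(x-m-\tfrac12)|^2$, the hypothesis $b(n)\ge1$ lets me keep only the terms $l=0$ and $l=1$ and discard the rest, which yields
\[
k(x,x)\ \ge\ \bigl(1+2\sin^2(2\pi x)\bigr)P_0(x)+2\cos^2(2\pi x)\,P_1(x)=:\phi(x).
\]
The function $\phi$ is continuous and $1$-periodic (both $P_0,P_1$ are $1$-periodic, and $P_0+P_1\equiv2$ by Theorem~\ref{WB_char}(iii)), hence attains a minimum. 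It cannot vanish: $\phi(x)=0$ would force $P_0(x)=0$ together with $\cos(2\pi x)=0$; but at the zeros of $\cos(2\pi x)$, i.e.\ $x\in\tfrac14+\tfrac12\Z$, the evenness of $g$ identifies the set $\{x-m\}_m$ with $-\{x-m-\tfrac12\}_m$, so that $P_0(x)=P_1(x)$ and then $P_0(x)=1$ by $P_0+P_1\equiv2$. Thus $\inf_x k(x,x)\ge\inf_x\phi(x)>0$. This positivity -- and in particular ruling out the oscillatory cancellation at $\cos(2\pi x)=0$ -- is the step I expect to be the main obstacle.

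The off-diagonal decay is routine. Since $|\psi_{n,l}(x)|\le\sqrt2\,|g(x-n/2)|$ and $b(n)\le B$, I would bound $|k(x,y)|\le 2(B+1)\sum_{n\in\Z}|g(x-n/2)|\,|g(y-n/2)|$ and invoke the standard estimate that, under $|g(t)|\le C(1+|t|)^{-1-\epsilon}$, the right-hand side is $\lesssim(1+|x-y|)^{-1-\epsilon}$: one splits the sum according to whether $n/2$ lies closer to $x$ or to $y$, so that one factor is $\lesssim(1+|x-y|)^{-1-\epsilon}$ while the other sums to a constant. This gives hypothesis (ii) with the same $\epsilon$.

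Finally, applying Theorem~\ref{NDC:Thm} reduces the claim to $\liminf_{r\to\infty}\inf_x\frac{1}{2r}\int_{x-r}^{x+r}k(y,y)\,dy\ge1+\overline{b}$. I would split $k(y,y)=P_0(y)+\sum_{n}\sum_{l=1}^{b(n)}|\psi_{n,l}(y)|^2$. The first piece integrates to $\frac{1}{2r}\int_{x-r}^{x+r}P_0=1+O(1/r)$ uniformly in $x$, because $P_0$ is $1$-periodic with $\int_0^1 P_0=\|g\|_2^2=1$; this produces the summand $1$. For the second piece, discarding the nonnegative terms with $n/2\notin[x-r,x+r]$ and using $\int_\R|\psi_{n,l}|^2=1$, I obtain a lower bound $\sum_{n/2\in[x-r,x+r]}b(n)-E$, where the boundary defect satisfies $E\le 2B\sum_{n/2\in[x-r,x+r]}\int_{|t|\ge d_n}|g(t)|^2\,dt$ with $d_n=\mathrm{dist}(n/2,\{x\pm r\})$. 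Since $\int_{|t|\ge s}|g|^2\,dt$ is summable over the $\tfrac12$-spaced distances $d_n$, one gets $E=O(1)$ uniformly, so $\tfrac{E}{2r}\to0$. Taking $\liminf_r\inf_x$ then yields $\overline{b}$ for the second piece, and altogether $D^-(\Lambda)\ge1+\overline{b}$.
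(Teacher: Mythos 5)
Your proposal is correct and follows essentially the same route as the paper: reduction to Theorem~\ref{NDC:Thm} via Lemma~\ref{PW_RKHS}, the diagonal lower bound using only the $l\in\{0,1\}$ terms together with Theorem~\ref{WB_char}(iii) and the evenness of $g$ at $x\in\tfrac14+\tfrac12\Z$, the off-diagonal decay via $|\psi_{n,l}|\le\sqrt2\,|g(\cdot-n/2)|$ and the convolution estimate of \cite{gr04}, and finally a lower bound for the averaged trace. The only immaterial difference is in the trace step, where you use the $1$-periodicity of $\sum_m|g(\cdot-m)|^2$ and an explicit tail-defect bound, while the paper runs an $\varepsilon$--$\rho$ mass-concentration argument; both give the same bound $1+\overline{b}$.
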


\begin{proof}
    By Lemma \ref{PW_RKHS}, $PW_b^2(g,\R)$ is a reproducing kernel Hilbert space. In order to  apply the results on sampling in general reproducing kernel Hilbert spaces from \cite{fghkr17} in the form of Theorem \ref{NDC:Thm}, we only need to verify the conditions on the reproducing kernel. 

    \vspace{2mm}
    
    \textbf{Step 1.}  We show that the diagonal $k(x,x)$ of the  reproducing kernel is bounded below (condition (i) of  Theorem \ref{NDC:Thm}).
    After substituting  the definition of a Wilson basis in \eqref{eq:RepKer}, we write
    \begin{equation} \label{eq:c1b}
        k(x,x) = \sum_{n \in \Z} |g(x-n)|^2 + \frac{1}{2}\sum_{n \in \Z}\sum_{l = 1}^{b(n)} |e^{2 \pi i l x} +(-1)^{l+n}e^{-2 \pi i l x}|^2 \Big|g\Big(x-\frac{n}{2}\Big)\Big|^2.
    \end{equation}
    Since $g$ has compact support, the series are locally finite. The continuity of $g$ implies that $k(x,x)$ is continuous, and \eqref{eq:c1b} shows that $k(x,x)$ is periodic with period $1$. Consequently it suffices to show that  $k(x,x)>0$ on $[0,1)$.

    We use $b(n) \geq 1$ for every $n \in \Z$ and obtain
    \begin{align}
       k(x,x)  \geq \sum_{n \in \Z} |g(x-n)|^2 + \frac{1}{2}\sum_{n \in \Z} |e^{2 \pi i x} +(-1)^{1+n}e^{-2 \pi i x}|^2 \Big|g\Big(x-\frac{n}{2}\Big)\Big|^2.
    \end{align}
    By dividing the second sum into even and odd  parts and applying (iii) of Theorem \ref{WB_char}, we get
    \begin{align}
       k(x,x) & \geq \sum_{n \in \Z} |g(x-n)|^2 + 2\sum_{n \in \Z}
                \abs{\sin{2 \pi x}}^2 |g(x-n)|^2 + 2\sum_{n \in
                \Z}\abs{\cos{2 \pi
                x}}^2\Bigl|g\Bigl(x-n-\frac{1}{2}\Bigr)\Bigr|^2 \notag
      \\
       & = \sum_{n \in \Z} |g(x-n)|^2 (1+2\abs{\sin{2 \pi x}}^2) +  2\sum_{n \in \Z}\abs{\cos{2 \pi x}}^2\Bigl|g\Bigl(x-n-\frac{1}{2}\Bigr)\Bigr|^2 \label{ineq:traceRK}\\
       & \geq \min\{1+2\abs{\sin{2 \pi x}}^2,2\abs{\cos{2 \pi
         x}}^2\}\sum_{n \in
         \Z}\Bigl|g\Bigl(x-\frac{n}{2}\Bigr)\Bigr|^2 \notag \\
       & = 2\min\{1+2\abs{\sin{2 \pi x}}^2,2\abs{\cos{2 \pi x}}^2\}
         . \notag 
    \end{align}
    
    The only possible zeros of $k(x,x)$ for $x \in [0,1)$ are for $\cos{2 \pi x} = 0$, i.e, at $x=\frac{1}{4}$ or $x = \frac{3}{4}$.
    Since $g$ is even, $k(x,x)$ is even, and thus periodicity yields  $k(\frac{1}{4},\frac{1}{4}) = k(-\frac{1}{4},-\frac{1}{4}) = k(\frac{3}{4},\frac{3}{4})$  and $k(\frac{1}{4},\frac{1}{4}) + k(\frac{3}{4},\frac{3}{4}) = k(\frac{1}{4},\frac{1}{4}) + k(-\frac{1}{4},-\frac{1}{4})$.
    Applying (iii) of Theorem \ref{WB_char} and from the inequality \eqref{ineq:traceRK}, we obtain
    \begin{align}
        k\Bigl(\frac{1}{4},\frac{1}{4}\Bigr) + k\Bigl(-\frac{1}{4},-\frac{1}{4}\Bigr) 
        & \geq 3 \Bigl (\sum_{n \in \Z} \Bigl |g \Bigl(\frac{1}{4}-n\Bigr)\Bigr|^2 + \sum_{n \in \Z} \Bigl |g\Bigl(-\frac{1}{4}-n\Bigr)\Bigr|^2 \Bigr)\\
        & = 3 \Bigl (\sum_{n \in \Z} \Bigl |g \Bigl(\frac{1}{4}-n\Bigr)\Bigr|^2 + \sum_{n \in \Z} \Bigl |g\Bigl(\frac{1}{4}-n-\frac{1}{2}\Bigr)\Bigr|^2 \Bigr) = 6.
    \end{align}
    
    By symmetry $k(\frac{1}{4},\frac{1}{4}) = k(\frac{3}{4},\frac{3}{4}) = 3 \neq 0$.\\
    It follows that $k(x,x) > 0$ for all $x \in [0,1)$ and thus $\inf_{x \in \R}k(x,x) = \min_{x \in \R} k(x,x) > 0$.

    \vspace{2mm}
    
    \textbf{Step 2.} We verify the required decay condition of the kernel (condition (ii) of Theorem \ref{NDC:Thm}). 
    Let $x,y \in \R$. Recall that $b(n) \leq B$ for every $n \in \Z$ and $|\psi_{n,l}(x)| \leq \sqrt{2} g(x-n/2)$ for $l \neq 0$, then we have
    \begin{align}\label{inq:kern_decay}
        |k(x,y)| 
        = & \abs[\Big]{\sum_{n \in \Z}\sum_{l = 0}^{b(n)}\overline{\psi_{n,l}(y)}\psi_{n,l}(x)}\nonumber\\
        = & \abs[\Big]{\sum_{n \in \Z} \overline{\psi_{n,0}(y)}\psi_{n,0}(x) + \sum_{n \in \Z} \sum_{l = 1}^{b(n)}\overline{\psi_{n,l}(y)}\psi_{n,l}(x)}\nonumber\\
        \leq & \sum_{n \in \Z} \{ \abs{g(y-n)g(x-n)} + 2b(n) \abs{g(y-n/2)g(x-n/2)}\}\nonumber\\
        \leq & \sum_{n \in \Z} \{ \abs{g(y-n)g(x-n)} + 2B \abs{g(y-n/2)g(x-n/2)}\}.
    \end{align}
    To deal with these sums, we use a convolution estimate  of the weight function $(1 + |x|)^{-1-\epsilon}$ from ~\cite{gr04} and obtain
    \begin{align}
            \sum_{n \in \Z} |g(y-n)g(x-n)| 
            &\leq C^2 \sum_{n \in \Z} (1 + |y-n|)^{-1-\epsilon} (1 + |x-n|)^{-1-\epsilon}\\
            &\leq C^2 C_0  (1 + |x-y|)^{-1-\epsilon} \, ,
     \end{align}
    where the last inequality is precisely the statement of \cite[Lemma~2]{gr04}. 

    Similarly, the second term in \eqref{inq:kern_decay} is bounded by
    \begin{equation}
        \sum_{n \in \Z} \abs{g(y-n/2)g(x-n/2)} \leq \widetilde{C} (1+|x-y|)^{-1-\epsilon},
    \end{equation}
    so that we obtain the desired off-diagonal decay of the kernel.
    Conditions (i) and (ii) of Theorem \ref{NDC:Thm} are satisfied, and Theorem \ref{NDC:Thm} is applicable.

    \vspace{2mm}
    
    \textbf{Step 3.} It remains to compute the averaged trace $\frac{1}{2r} \int_{x-r}^{x+r}k(y,y)dy$ and let $r $ tend to $\infty$. We write $B_r(x) = [x-r,x+r]$ in the following. 

    Fix $\varepsilon > 0$ and choose $\rho > 0$, such that $\int_{B_\rho(0)}|\psi_{0,l}(x)|^2dx \geq 1 - \varepsilon$ for $ l = 0,\dots,B$. Then
    \begin{align}
        & \int_{B_\rho(n/2)}|\psi_{n,l}(x)|^2dx \geq 1 - \varepsilon \quad \text{for }l = 1,\dots,B 
        \quad \text{ and}\\
        & \int_{B_\rho(n)}|\psi_{n,0}(x)|^2dx \geq 1 - \varepsilon.
    \end{align}
    Using \eqref{IneqBoundHalfTranslates}, we can apply dominated convergence and interchange summation and integration and obtain
    \begin{equation*}
         \frac{1}{2r} \int_{B_r(x)}k(y,y)dy
         = \frac{1}{2r} \int_{B_r(x)}\sum_{n \in \Z}\sum_{l = 0}^{b(n)} |\psi_{n,l}(y)|^2dy
         = \frac{1}{2r}\sum_{n \in \Z}\sum_{l = 0}^{b(n)} \int_{B_r(x)} |\psi_{n,l}(y)|^2dy.
     \end{equation*}
    
    If $\frac{n}{2} \in B_{r-\rho}(x)$, then $B_\rho(\frac{n}{2}) \subseteq B_r(x)$. We estimate the average trace as follows:
    \begin{align*}
       \frac{1}{2r} \int_{B_r(x)}k(y,y)dy
       & \geq \frac{1}{2r} \sum_{n \in B_{r-\rho}(x)}\int_{B_r(x)} |\psi_{n,0}(y)|^2dy + \frac{1}{2r} \sum_{\frac{n}{2} \in B_{r-\rho}(x)}\sum_{l = 1}^{b(n)}\int_{B_r(x)} |\psi_{n,l}(y)|^2dy\\
       & \geq \frac{1}{2r} (1 - \varepsilon) \Bigl [\#(B_{r-\rho}(x) \cap \Z) + \sum_{\frac{n}{2} \in B_{r-\rho}(x)} b(n)\Bigr]\\
       & \geq \frac{1}{2r} (1 - \varepsilon) \Bigl [2(r-\rho) + \sum_{\frac{n}{2} \in B_{r-\rho}(x)} b(n)\Bigr]\\
       & = (1 - \varepsilon) \frac{r-\rho}{r} + \frac{1}{2r}(1 - \varepsilon) \sum_{\frac{n}{2} \in B_{r-\rho}(x)} b(n).\\
    \end{align*}
    Letting $r \to \infty$, we obtain
    \begin{equation}
        \liminf_{r \to \infty} \inf_{x \in \R}  \frac{1}{2r} \int_{B_r(x)}k(y,y)dy 
        \geq 1- \varepsilon + (1-\varepsilon) \liminf_{r \to \infty} \inf_{x \in \R} \frac{1}{2r} \sum_{\frac{n}{2} \in B_{r}(x)} b(n).
    \end{equation}
    Define $\overline{b} \coloneqq \liminf_{r \to \infty}\inf_{x \in
      \R} \frac{1}{2r}\sum_{\frac{n}{2} \in B_{r}(x)} b(n)$ as  the average bandwidth in $B_r(x)$. 
    Since $\varepsilon >0$ is arbitrary, Theorem \ref{NDC:Thm} yields 
    \begin{equation}
        D^-(\Lambda) \geq 1 + \overline{b} \, ,
    \end{equation}
    which finishes the proof.
\end{proof}

\begin{remarks}
    (i) Note that in the above proof the properties of Wilson bases play a crucial role. In particular, we have used the characterization  (iii) of Theorem \ref{WB_char} and the symmetry of the window function $g$. 
    
    (ii) It is easy to see  that for a compactly supported window $g$ Theorem \ref{NCW:compact} implies directly Theorem \ref{thm:NDCgeneral}.
\end{remarks}

\section{Sufficient condition for sampling}\label{Sec4}
With some necessary conditions for sampling in place, we now investigate the question under which conditions on a set a function in $PW_b^2(g,\R)$ can be reconstructed completely. In this context, we use a Wilson basis with a continuous, compactly supported window $g$. This condition guarantees the locality of the reconstruction. 

We will index the sampling set $\Lambda$ as follows
\begin{equation}\label{Lambda_ss}
    \Lambda := \Big \{x_{\frac{k}{2},j} \in \R : x_{\frac{k}{2},j} = \frac{k}{2} + \eta_{\frac{k}{2},j}, \text{ where } k \in \Z,  \text{ }  j= 1, \dots ,j_{\max}(\tfrac{k}{2}), \text{ } \eta_{\frac{k}{2},j} \in [0,1/2)\Big\}\, .
\end{equation}
We always assume that the sampling points $x_{k/2,j}$ are consecutive points in  $[k/2,k/2+1/2)$, i.e., $x_{k/2,j} < x_{k/2,j+1}$ for $j= 1, \dots ,   j_{\max}(\tfrac{k}{2})-1$. We measure the local sampling density by the maximum gap between the samples, i.e. 
\begin{equation}
    \delta_\frac{k}{2} =
    \max_{j=1,\dots,j_{\max}(\frac{k}{2})-1}(x_{\frac{k}{2},j+1}-x_{\frac{k}{2},j}), 
\end{equation}
where the number $j_{\max}(\frac{k}{2})$ depends on the interval.

In the following we will represent the functions in $\pw $ in the form  
\begin{equation} \label{eq:c2}
    f(x) 
    = \sum_{k \in \Z} \sum_{l = 0}^{b(k)}c_{k,l}\psi_{k,l}(x)
    = \sum_{k \in \Z} P_k(x) g(x-k/2),
\end{equation}
where $P_k(x):=\sum_{l = 0}^{b(k)} c_{k,l}d_l(M_l + (-1)^{l+k}M_{-l})$ is a trigonometric polynomial of degree $b(k)$ and period 1. Roughly speaking, a function in $\pw $ restricted to $[\tfrac{k}{2},\tfrac{k+1}{2}]$ behaves like a trigonometric
polynomial of degree $b(k)$.  We will use the fact that in this representation of $f\in \pw $ 
$$
\sum _{l=0}^{b(k)} |c_{k,l}|^2 = \|P_k\|_2^2 = \int _c^{c+1} |P_k(x)|^2 \, dx 
$$
(for every $c\in \R $) and
$$
\sum _{k\in \Z } \|P_k\|_2^2 = \|f\|_2^2 \, . 
$$

Since according to our model space $PW_b^2(g,\R)$, the local bandwidth on $[k/2,k/2+1/2)$ is roughly the degree $b(k)$ of the local polynomial $P_k$, 
we expect that the gap $\delta_{k/2}$ on $[k/2,k/2+1/2)$ is related to $b(k)$. 

The following theorem makes this expectation rigorous and provides sufficient condition for sampling for $PW_b^2(g,\R)$. For the proof we use smoothness estimates in $\pw$ (via Wirtinger's and Bernstein's inequality). For bandlimited functions this method goes back to \cite{gr92}, and was later called the adaptive weights method in \cite{fg94}. An axiomatic, extremely general approach was later presented in~\cite{fm11}.

\begin{theorem}\label{thm:suffcond}
    Let $g \in \mathcal{C}^1(\R)$ be a real-valued, even function with $\supp(g) \subseteq [-m, m]$ that generates an \owb .
    Let $PW_b^2(g,\R)$ be the space of variable bandwidth defined in \eqref{def:PW} and let $\Lambda \subseteq \R$ be as in
    \eqref{Lambda_ss} with $x_{\frac{k}{2},1}-\tfrac{k}{2} \leq \tfrac{1}{2}\delta_{\frac{k}{2}}$ and $\frac{k+1}{2}-x_{\frac{k}{2},j_{\max}(\frac{k}{2})} \leq \tfrac{1}{2}\delta_{\frac{k}{2}}$. 
    Define 
    \begin{equation}\label{def:D}
        D \coloneqq (4m) \cdot  \max \{2 \pi \norm{g}_\infty, \norm{g'}_\infty\}
    \end{equation}
    and 
    \begin{equation}\label{def:mu}
        \mu_{\frac{k}{2}} \coloneqq \max_{n =(k-2m,k+2m+1)\cap \Z} \Big(b(n)+1\Big).
    \end{equation}
    If for every $ k \in \Z$
    \begin{equation}\label{eq:SuffCond}
        \d_{\frac{k}{2}} < \frac{\pi}{\mu_{\frac{k}{2}} D},
    \end{equation}
    then $\Lambda $ is a sampling set and satisfies a weighted sampling inequality
    \begin{equation}\label{eq:c1}
        A\norm{f}_2^2 \leq  \sum_{k \in \Z} \sum_{j = 1}^{j_{\max}(\frac{k}{2})}|f(x_{\frac{k}{2},j})|^2
        w_{\frac{k}{2},j}  \leq B\norm{f}_2^2 \qquad \text{ for all } f\in \pw \, ,
    \end{equation}
    where $w_{\frac{k}{2},j} >0$ is a suitable sequence of weights. As a consequence, every $f \in PW_b^2(g,\R)$ can be reconstructed completely from the samples in $\Lambda$. 
\end{theorem}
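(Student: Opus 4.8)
The plan is to run the adaptive weights method of \cite{gr92,fg94}. Since the samples lie in the disjoint half-integer intervals $[k/2,(k+1)/2)$ that tile $\R$, I first build from $\Lambda$ a Voronoi-type partition: inside each $[k/2,(k+1)/2)$ let $V_{\frac{k}{2},j}$ be the cell bounded by the midpoints of consecutive samples, with the outer edges sitting at $k/2$ and $(k+1)/2$. Put $w_{\frac{k}{2},j}:=|V_{\frac{k}{2},j}|$ and define the piecewise constant quasi-interpolant
\[
  Qf:=\sum_{k\in\Z}\sum_{j=1}^{j_{\max}(\frac{k}{2})} f(x_{\frac{k}{2},j})\,\indicator_{V_{\frac{k}{2},j}},
\]
so that $\norm{Qf}_2^2=\sum_{k,j} w_{\frac{k}{2},j}\abs{f(x_{\frac{k}{2},j})}^2$ is exactly the middle term of \eqref{eq:c1}. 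The boundary hypotheses $x_{\frac{k}{2},1}-\frac{k}{2}\le\frac12\d_{\frac{k}{2}}$ and $\frac{k+1}{2}-x_{\frac{k}{2},j_{\max}}\le\frac12\d_{\frac{k}{2}}$ guarantee that each cell contains its sample with both half-lengths at most $\frac12\d_{\frac{k}{2}}$. The whole theorem follows once I show that $Q$ is close to the identity on $\pw$, i.e. $\norm{f-Qf}_2\le\theta\norm{f}_2$ with $\theta<1$.

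\textbf{Quadrature error via Wirtinger.} On each cell the function $f-f(x_{\frac{k}{2},j})$ vanishes at the interior point $x_{\frac{k}{2},j}$, so Lemma~\ref{Wirt_ineq} with half-lengths $\le\frac12\d_{\frac{k}{2}}$ gives $\int_{V_{\frac{k}{2},j}}\abs{f-f(x_{\frac{k}{2},j})}^2\le\frac{\d_{\frac{k}{2}}^2}{\pi^2}\int_{V_{\frac{k}{2},j}}\abs{f'}^2$. Summing over the cells inside $[k/2,(k+1)/2)$ and then over $k\in\Z$ yields
\[
  \norm{f-Qf}_2^2\le\sum_{k\in\Z}\frac{\d_{\frac{k}{2}}^2}{\pi^2}\int_{k/2}^{(k+1)/2}\abs{f'(x)}^2\,dx.
\]

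\textbf{Local derivative estimate (the crux).} Using $f=\sum_k P_k\,g(\cdot-k/2)$ from \eqref{eq:c2} I differentiate $f'=\sum_n\big(P_n'\,g(\cdot-n/2)+P_n\,g'(\cdot-n/2)\big)$. Because $\supp g\subseteq[-m,m]$, on a fixed $[k/2,(k+1)/2]$ only the indices $n\in S_k:=(k-2m,k+2m+1)\cap\Z$ contribute and $\#S_k\le 4m$. Cauchy--Schwarz over these $\le 4m$ terms, Bernstein's inequality $\norm{P_n'}_2\le 2\pi b(n)\norm{P_n}_2$ (Lemma~\ref{Bern_ineq}), the bound $\int_{k/2}^{(k+1)/2}\abs{P_n}^2\le\norm{P_n}_2^2$, and $b(n)+1\le\mu_{\frac{k}{2}}$ combine to
\[
  \int_{k/2}^{(k+1)/2}\abs{f'}^2\le 4m\,\big(\max\{2\pi\norm{g}_\infty,\norm{g'}_\infty\}\,\mu_{\frac{k}{2}}\big)^2\sum_{n\in S_k}\norm{P_n}_2^2=\frac{(\mu_{\frac{k}{2}}D)^2}{4m}\sum_{n\in S_k}\norm{P_n}_2^2,
\]
where the Cauchy--Schwarz factor $4m$ and the definition $D=(4m)\max\{2\pi\norm{g}_\infty,\norm{g'}_\infty\}$ produce exactly $\tfrac{1}{4m}(\mu_{\frac{k}{2}}D)^2$. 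Feeding this into the Wirtinger bound and invoking $\d_{\frac{k}{2}}<\pi/(\mu_{\frac{k}{2}}D)$, each $\theta_k^2:=\d_{\frac{k}{2}}^2(\mu_{\frac{k}{2}}D)^2/\pi^2$ is $<1$; interchanging the order of summation and using that every fixed $n$ lies in exactly $4m$ of the sets $S_k$ cancels the $\tfrac{1}{4m}$ against this multiplicity:
\[
  \norm{f-Qf}_2^2\le\frac{1}{4m}\sum_{n\in\Z}\norm{P_n}_2^2\sum_{k:\,n\in S_k}\theta_k^2\le\theta^2\sum_{n\in\Z}\norm{P_n}_2^2=\theta^2\norm{f}_2^2,
\]
with $\theta:=\sup_k\d_{\frac{k}{2}}\mu_{\frac{k}{2}}D/\pi$; the strict gap condition together with the boundedness of $b$ (hence of $\mu_{\frac{k}{2}}$) forces $\theta<1$. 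I expect this bookkeeping --- arranging that the $4m$-fold overlap multiplicity of the Wilson translates exactly absorbs the Cauchy--Schwarz loss so that the constant is \emph{genuinely} below $1$ --- to be the main obstacle; the rest is soft.

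\textbf{Conclusion.} From $\norm{f-Qf}_2\le\theta\norm{f}_2$ and the reverse triangle inequality, $(1-\theta)\norm{f}_2\le\norm{Qf}_2\le(1+\theta)\norm{f}_2$; squaring and recalling $\norm{Qf}_2^2=\sum_{k,j}w_{\frac{k}{2},j}\abs{f(x_{\frac{k}{2},j})}^2$ gives the weighted inequality \eqref{eq:c1} with $A=(1-\theta)^2$ and $B=(1+\theta)^2$. For the reconstruction, let $P$ be the orthogonal projection of $L^2(\R)$ onto $\pw$ and set $Af:=PQf$; then $\norm{f-Af}_2=\norm{P(f-Qf)}_2\le\theta\norm{f}_2$, so $\norm{I-A}<1$ and $A$ is invertible on $\pw$ by its Neumann series, giving the explicit reconstruction $f=\sum_{i\ge0}(I-A)^iAf$ in which $Af$ depends only on the samples $\{f(x_{\frac{k}{2},j})\}$. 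Finally, if $\Lambda$ is separated the weights $w_{\frac{k}{2},j}$ are bounded above and below, so the weighted sum is comparable to $\sum_{\lambda\in\Lambda}\abs{f(\lambda)}^2$ and $\Lambda$ is a set of sampling in the sense of \eqref{ineq:ss}.
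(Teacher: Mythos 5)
Your proposal is correct and follows essentially the same route as the paper's proof: the same Voronoi-type partition and piecewise-constant quasi-interpolant, Wirtinger's inequality on each cell, Bernstein's inequality combined with the compact support of $g$ for the local derivative estimate, and inversion of $A=PQ$ by a Neumann series, with the same weights and the same frame constants $(1\mp\theta)^2$. The only variation is in the bookkeeping of the overlapping translates: where the paper expands the cross terms and applies Young's inequality to the discrete convolution $\alpha \ast \indicator_{(-2m-1,2m)}$, you use pointwise Cauchy--Schwarz over the at most $4m$ active indices and cancel that loss against the $4m$-fold multiplicity with which each $n$ appears in the sets $S_k$ --- an equivalent device that produces the identical constant $D$.
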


\begin{remark}
    We have chosen a formulation that separates the influence of the window $g$, the local bandwidth $\mu _{k/2}$, and the local sampling density $\delta _{k/2}$ into different constants. In this way, the sufficient
    condition~\eqref{eq:SuffCond} looks formally similar to the maximum gap condition for sampling of bandlimited functions, e.g., in~\cite{gr92}. 
  \end{remark}

\begin{proof} 
    Let $f \in PW_b^2(g,\R)$. We define $y_{\frac{k}{2},0} = k/2$ and $y_{\frac{k}{2},j_{\max}(\frac{k}{2})} = k/2+1/2$, and let $y_{\frac{k}{2},j}=\frac{1}{2}(x_{\frac{k}{2},j}+x_{\frac{k}{2},j+1})$, $j= 1, \dots, j_{\max}(\frac{k}{2}) -1$ be the midpoints between the samples. Set $\indicator_{\frac{k}{2},j} = \indicator_{[y_{\frac{k}{2},j-1},y_{\frac{k}{2},j})}$. Moreover, if the maximum gap on $[\tfrac{k}{2}, \tfrac{k+1}{2}]$ is $\delta_\frac{k}{2}$, then $y_{\frac{k}{2},j}-x_{\frac{k}{2},j} \leq \frac{1}{2}\delta_\frac{k}{2}$ and $x_{\frac{k}{2},j}-y_{\frac{k}{2},j-1}\leq \frac{1}{2}\delta_{\frac{k}{2}}$.

    \vspace{2mm}
    
    \textbf{Step 1. Approximation from samples.} Let $P$ be the orthogonal projection from $\Lt$ onto $PW_b^2(g,\R)$.  Motivated by \cite{gr92}, we approximate $f$ by a step function and project onto $PW_b^2(g,\R)$ with $P$. The resulting approximation operator is defined by
    \begin{equation}
        Af = P \Big (\sum_{k \in \Z} \sum_{j = 1}^{j_{\max}(\frac{k}{2})} f\bigl(x_{\frac{k}{2},j}\bigr) \indicator_{\frac{k}{2},j}\Big).
    \end{equation}
    If $f\in \pw$, then $Af \in \pw$ and $A$ maps $\pw $ to $\pw $. We emphasize that $A$ uses only the samples $f(x_{\frac{k}{2},j})$. The question is how well the operator $A$ approximates the identity.
    
    Since $f = Pf = P(\sum_{k,j}f\indicator_{\frac{k}{2},j})$ and the characteristic functions $\indicator_{\frac{k}{2},j}$ have mutually disjoint support, we have
     \begin{align}
        \norm{f-Af}_2^2 & = 
        \Bigl\lVert P \Bigl (\sum_{k \in \Z} \sum_{j = 1}^{j_{\max}(\frac{k}{2})}(f - f(x_{\frac{k}{2},j})) \indicator_{\frac{k}{2},j}\Bigr) \Bigr\rVert_2^2 \\
        &\leq \Bigl\lVert \sum_{k \in \Z} \sum_{j = 1}^{j_{\max}(\frac{k}{2})}(f - f(x_{\frac{k}{2},j})) \indicator_{\frac{k}{2},j}\Bigr \rVert_2^2 \label{ineq:diff}\\
        & = \int_\R \Bigl \lvert\sum_{k \in \Z} \sum_{j = 1}^{j_{\max}(\frac{k}{2})}(f(x) - f(x_{\frac{k}{2},j})) \indicator_{\frac{k}{2},j}(x)\Bigr \rvert^2dx\\
        & = \int_\R \sum_{k \in \Z} \sum_{j = 1}^{j_{\max}(\frac{k}{2})}\Bigl\lvert(f(x) - f(x_{\frac{k}{2},j})) \indicator_{\frac{k}{2},j}(x)\Bigr\rvert^2dx .
    \end{align}
    The sums converge absolutely, hence we can interchange summation and integration. Thus we can write
    \begin{equation*}
        \norm{f-Af}_2^2 \leq \sum_{k\in \Z}\sum_{j = 1}^{j_{\max}(\frac{k}{2})} \int_{y_{\frac{k}{2},j-1}}^{y_{\frac{k}{2},j}}|f(x)-f(x_{\frac{k}{2},j})|^2dx.
    \end{equation*}
    By applying Wirtinger's inequality \eqref{WirtIneq} to every term, we obtain
    \begin{align*}
        \sum_{k\in \Z}\sum_{j = 1}^{j_{\max}(\frac{k}{2})} & \int_{y_{\frac{k}{2},j-1}}^{y_{\frac{k}{2},j}}|f(x)-f(x_{\frac{k}{2},j})|^2dx\\
        & \leq \frac{4}{\pi ^2} \sum_{k\in \Z}\sum_{j = 1}^{j_{\max}(\frac{k}{2})} \max \Bigl\{(x_{\frac{k}{2},j}-y_{\frac{k}{2},j-1})^2,(y_{\frac{k}{2},j}-x_{\frac{k}{2},j})^2\Bigr\} \int_{y_{\frac{k}{2},j-1}}^{y_{\frac{k}{2},j}}|f'(x)|^2dx\\
        & \leq \sum_{k\in \Z} \frac{1}{\pi ^2}\delta_\frac{k}{2}^2 \sum_{j = 1}^{j_{\max}(\frac{k}{2})}  \int_{y_{\frac{k}{2},j-1}}^{y_{\frac{k}{2},j}}|f'(x)|^2dx.
    \end{align*}
    By fixing $k \in \Z$, we get for a single term
    \begin{equation*}
        \frac{1}{\pi ^2}\delta_\frac{k}{2}^2 \sum_{j = 1}^{j_{\max}(\frac{k}{2})}  \int_{y_{\frac{k}{2},j-1}}^{y_{\frac{k}{2},j}}|f'(x)|^2dx 
        = \frac{1}{\pi ^2}\delta_\frac{k}{2}^2 \int_{\frac{k}{2}}^{\frac{k}{2}+\frac{1}{2}}|f'(x)|^2dx = \frac{1}{\pi ^2}\delta_\frac{k}{2}^2 \norm{f'\indicator_{[\frac{k}{2},\frac{k}{2}+\frac{1}{2})}}_2^2.
    \end{equation*}
    
    \vspace{2mm}
    
    \textbf{Step 2. Estimate of local smoothness}
    $\int_{\frac{k}{2}}^{\frac{k}{2}+\frac{1}{2}}|f'(x)|^2dx$.
    
    As in~\eqref{eq:c2} we write $f$ in the form $f(x) = \sum_{n \in \Z} P_n(x) g(x-n/2) $ with 
    $ P_n(x):=\sum_{l =0}^{b(n)} c_{n,l}d_l(M_l + (-1)^{l+n}M_{-l})$ and $\sum _{n\in \Z} \|P_n\|_2^2 = \|f\|_2^2$.  
    Then the  derivative $f'$ is
    \begin{equation}
        f'(x) = \sum_{n \in \Z} 
            \Big ( P_n'(x) T_\frac{n}{2}g(x) + P_n(x) T_\frac{n}{2}g'(x)
            \Big ).
    \end{equation}
    Since $\supp(g) \subseteq [-m,m]$, the sum is locally finite. The assumption $x \in \bigl[\frac{k}{2}, \frac{k}{2} + \frac{1}{2}\bigr)$ requires  $x - \frac{n}{2} \in (-m,m)$. This restricts the summation to indices 
    $n \in (k-2m,k + 1 + 2m) \cap \Z$. We abbreviate this interval by  
    \begin{equation}\label{def:Fk}
        F_k \coloneqq (k-2m,k + 1 + 2m) \cap \Z.
    \end{equation} 
    Then we have
    \begin{align}\label{deriv_terms}
         \int_{\frac{k}{2}}^{\frac{k}{2}+\frac{1}{2}}|f'(x)|^2dx &= 
        \int_{\frac{k}{2}}^{\frac{k}{2}+\frac{1}{2}} \Big |\sum_{n \in F_k}\Big ( P_n'(x) T_\frac{n}{2}g(x) + P_n(x) T_\frac{n}{2}g'(x) \Big )
        \Big |^2dx \nonumber \\
        & = \sum_{n,j \in F_k}\int_{\frac{k}{2}}^{\frac{k}{2}+\frac{1}{2}} \Big ( 
            P_n'(x) T_\frac{n}{2}g(x)\overline{P_j'(x) T_\frac{j}{2}g(x)}\\
        & \qquad \qquad \qquad    
        + 2 \Rp \Big\{P_n'(x) T_\frac{n}{2}g(x) \overline{P_j(x) T_\frac{j}{2}g'(x)} \Big\} \nonumber \\
        & \qquad \qquad \qquad + P_n(x) T_\frac{n}{2}g'(x)\overline{P_j(x) T_\frac{j}{2}g'(x)} \Big )dx \nonumber.
    \end{align}
    To bound the first term, we apply Bernstein's inequality \eqref{eq:BernIneq} to the polynomial $P_n$ of degree $b(n)$ and use the obvious estimate 
    $\norm{T_\frac{n}{2}gT_\frac{j}{2}\overline{g}}_\infty \leq
    \norm{g}_\infty^2$. It follows that 
    \begin{align}
        \int_{\frac{k}{2}}^{\frac{k}{2}+\frac{1}{2}} \Big ( 
            P_n'(x) T_\frac{n}{2}g(x)\overline{P_j'(x) T_\frac{j}{2}g(x)} \Big )dx 
        & \leq \norm{T_\frac{n}{2}gT_\frac{j}{2}\overline{g}}_\infty\norm{P_n'}_2\norm{P_j'}_2\\
        & \leq \norm{g}_\infty^2 4 \pi^2  b(n) b(j) \norm{P_n}_2\norm{P_j}_2.
    \end{align}
    Taking the sum over $n$ and $j$, we obtain
    \begin{align}\label{ineq:Int1}
        \sum_{n,j \in F_k} &\int_{\frac{k}{2}}^{\frac{k}{2}+\frac{1}{2}} \Big ( 
            P_n'(x) T_\frac{n}{2}g(x)\overline{P_j'(x) T_\frac{j}{2}g(x)}\Big )dx\\
        & \leq 4 \pi^2 \norm{g}_\infty^2   \sum_{n,j \in F_k} b(n) b(j) \norm{P_n}_2\norm{P_j}_2\\
        & = 4 \pi^2 \norm{g}_\infty^2  \Big ( \sum_{n \in F_k} b(n) \norm{P_n}_2\Big )^2.
    \end{align}
    For the middle terms of \eqref{deriv_terms} we obtain in a similar way that
    \begin{align}
        2 &\int_{\frac{k}{2}}^{\frac{k}{2}+\frac{1}{2}} \Rp \Big\{P_n'(x) T_\frac{n}{2}g(x) \overline{P_j(x) T_\frac{j}{2}g'(x)} \Big\}dx\\ 
        & \leq 2\norm{T_\frac{n}{2}gT_\frac{j}{2}\overline{g'}}_\infty \norm{P_n'}_2\norm{P_j}_2\\
        & \leq 4 \pi \norm{g}_\infty \norm{g'}_\infty b(n) \norm{P_n}_2\norm{P_j}_2.
    \end{align}
    After summing over $n$ and $j$, we have
    \begin{align}\label{ineq:Int2}
        \sum_{n,j \in F_k} & \int_{\frac{k}{2}}^{\frac{k}{2}+\frac{1}{2}} 2 \Rp \Big\{P_n'(x) T_\frac{n}{2}g(x) \overline{P_j(x) T_\frac{j}{2}g'(x)} \Big\}dx\\
        & \leq  4 \pi \norm{g}_\infty \norm{g'}_\infty \Big ( \sum_{n \in F_k} b(n) \norm{P_n}_2\Big )\Big ( \sum_{n \in F_k} \norm{P_n}_2\Big ).
    \end{align}
    The bound for the last term of \eqref{deriv_terms} is
    \begin{align}\label{ineq:Int3}
        \sum_{n,j \in F_k} & \int_{\frac{k}{2}}^{\frac{k}{2}+\frac{1}{2}} \Big (P_n(x) T_\frac{n}{2}g'(x)\overline{P_j(x) T_\frac{j}{2}g'(x)} \Big )dx\\
        & \leq \sum_{n,j \in F_k} \norm{T_\frac{n}{2}g'T_\frac{j}{2}\overline{g'}}_\infty \norm{P_n}_2 \norm{P_j}_2\\
        & \leq \norm{g'}^2_\infty \Big ( \sum_{n \in F_k} \norm{P_n}_2 \Big)^2.
    \end{align}

    \vspace{2mm}
    
    \textbf{Step 3. Intermediate estimate for $f-Af$. } Adding all terms, the local estimate for the derivative is
    \begin{align*}
        \int_{\frac{k}{2}}^{\frac{k}{2}+\frac{1}{2}}|f'(x)|^2dx & \leq
        4 \pi^2 \norm{g}_\infty^2 \Big ( \sum_{n \in F_k} b(n) \norm{P_n}_2 \Big )^2 + \norm{g'}^2_\infty \Big ( \sum_{n \in F_k} \norm{P_n}_2 \Big)^2\\
        & \qquad + 4 \pi \norm{g}_\infty \norm{g'}_\infty \Big ( \sum_{n \in F_k} b(n) \norm{P_n}_2\Big )\Big ( \sum_{n \in F_k} \norm{P_n}_2\Big )\\
        & = \Big ( 2 \pi \norm{g}_\infty \sum_{n \in F_k} b(n) \norm{P_n}_2  + \norm{g'}_\infty \sum_{n \in F_k} \norm{P_n}_2 \Big )^2\\
        & \leq \max \{2 \pi \norm{g}_\infty, \norm{g'}_\infty\}^2 \Big (  \sum_{n \in F_k} \norm{P_n}_2 \bigr(b(n)+1 \bigl) \Big )^2.
    \end{align*}   
    Summing all the terms, we obtain
    \begin{align}
        \norm{f-Af}_2^2  
        &\leq \sum_{k \in \Z} \frac{1}{\pi ^2} \d_\frac{k}{2}^2 \int_{\frac{k}{2}}^{\frac{k}{2}+\frac{1}{2}}|f'(x)|^2dx\\
        &\leq \frac{1}{\pi ^2}\max \{2 \pi \norm{g}_\infty, \norm{g'}_\infty\}^2 \sum_{k \in \Z} \d_\frac{k}{2}^2 \Big ( \sum_{n \in F_k} \norm{P_n}_2 \bigr(b(n)+1 \bigl)  \Big )^2.
    \end{align}
    We define the active local bandwidth as 
    \begin{equation}
        \mu_{\frac{k}{2}} \coloneqq \max_{n =(k-2m,k+2m+1)\cap \Z}
        \Big(b(n)+1\Big) \, .
    \end{equation}
    Then  we can bound $f-Af$  in the following way
    \begin{equation}
        \norm{f-Af}_2^2 \leq \frac{1}{\pi ^2} \max \{2 \pi \norm{g}_\infty, \norm{g'}_\infty\}^2 \sum_{k \in \Z} \d_\frac{k}{2}^2 \mu_{\frac{k}{2}}^2 \Big ( \sum_{n \in F_k} \norm{P_n}_2 \Big )^2.
    \end{equation}

    \vspace{2mm}
    
    \textbf{Step 4. Final estimate for $f-Af$.}  
    We interpret $\sum_{n\in F_k} \norm{P_n}_2$ as a convolution. Let $\alpha :  \Z  \to \R$ be the sequence $\alpha(n) =\norm{P_n}_2$ and $\indicator_{(-2m-1,2m)}$ be the characteristic function of $(-2m-1,2m)$. 
    Then
    \begin{align}
        \Big ( \alpha \ast \indicator_{(-2m-1,2m)}\Big) (k) 
        & = \sum_{n \in \Z} \alpha(n) \indicator_{(-2m-1,2m)} (k - n) 
        = \sum_{n \in \Z} \alpha(n) \indicator_{(k-2m,k+2m+1)} (n)\\
        & = \sum_{n \in F_k} \alpha(n) = \sum_{n  \in F_k}\norm{P_n}_2.
    \end{align}
    We can rewrite the error as
    \begin{equation}
        \norm{f-Af}_2^2 \leq \frac{1}{\pi ^2} \max \{2 \pi \norm{g}_\infty, \norm{g'}_\infty\}^2 \sum_{k \in \Z} \d_\frac{k}{2}^2 \mu_{\frac{k}{2}}^2 \Big ( \alpha \ast \indicator_{(-2m-1,2m)}(k) \Big )^2.
    \end{equation}
    The right-hand side can readily be estimated with Young's inequality as
    \begin{align*}
        \sum_{k \in \Z} \d_\frac{k}{2}^2 \mu_{\frac{k}{2}}^2\Big ( \alpha \ast \indicator_{(-2m-1,2m)}(k)\Big)^2
        &\leq \sup_{k \in \Z} \bigl(\d_\frac{k}{2}^2 \mu_{\frac{k}{2}}^2\bigr) \norm{\alpha \ast\indicator_{(-2m-1,2m)}}_2^2\\
        &\leq \sup_{k \in \Z} \bigl(\d_\frac{k}{2}^2 \mu_{\frac{k}{2}}^2\bigr)
        \norm{\indicator_{(-2m-1,2m)}}_1^2 \norm{\alpha}_2^2 \\ 
        &\leq \sup_{k \in \Z} \bigl(\d_\frac{k}{2}^2 \mu_{\frac{k}{2}}^2\bigr) (4m)^2 \sum_{k \in \Z}\norm{P_k}_2^2
    \end{align*}
    since $\norm{\indicator_{(-2m-1,2m)}}_1 = 4m$.
    Defining $D \coloneqq 4m\, \max \{2 \pi \norm{g}_\infty, \norm{g'}_\infty\}$, we rewrite
    \begin{equation}
        \norm{f-Af}_2^2
        \leq \frac{1}{\pi ^2} D^2\sup_{k \in \Z} \bigl(\d_\frac{k}{2}^2 \mu_{\frac{k}{2}}^2\bigr) \sum_{k \in \Z}\norm{P_k}_2^2.
    \end{equation}
    Hypothesis~\eqref{eq:SuffCond} says that   $\frac{1}{\pi ^2} D^2 \d_\frac{k}{2}^2 \mu_{\frac{k}{2}}^2 < 1$ for every $k \in \Z$,  therefore the error constant $\gamma^2 \coloneqq  \frac{1}{\pi ^2} D^2  \sup_{k \in \Z} \bigl(\d_\frac{k}{2}^2 \mu_{\frac{k}{2}}^2\bigr) $ satisfies $\gamma < 1$. Hence, 
    \begin{equation}\label{ineq:boundI-Af}
        \norm{f-Af}_2^2 
        \leq \gamma^2 \sum_{k \in \Z} \norm{P_k}^2_2
        = \gamma^2 \norm{f}_2^2.
    \end{equation}
    Consequently,  the operator $A$ is invertible on $PW_b^2(g,\R)$ with an inverse $A^{-1}$ satisfying 
    \begin{equation}
       \norm{A^{-1}}_{\text{op}} = 
       \Bigl \lVert\sum_{n = 0}^{\infty}(\text{Id}-A)^n \Bigr \rVert_{\text{op}} 
       \leq \sum_{n = 0}^{\infty} \lVert \text{Id}-A \rVert^n_{\text{op}}
       = \frac{1}{1-\gamma},
    \end{equation}
    
    \vspace{2mm}
    
    \textbf{Step 5. Reconstruction and sampling estimate.}
    It follows that $f=A^{-1} (Af)$ with $Af$ using only the samples on $\Lambda = \{x_{\frac{k}{2},j} \}$.  Hence $f$ is completely determined by $f(x_{\frac{k}{2},j})$ for every $k \in \Z$ and $j=1,\dots,j_{\max}(\frac{k}{2})$.

    The lower sampling inequality in~\eqref{eq:c1}, equivalently, the stability of the reconstruction, follows from
    \begin{align}
        \norm{f}_2^2 &= \norm{A^{-1}Af}_2 ^2
        \leq \norm{A^{-1}}_{\text{op}}^2\norm{P}_{\text{op}}^2 \Bigl \lVert\sum_{k \in \Z} \sum_{j = 1}^{j_{\max}(\frac{k}{2})}f\bigl(x_{\frac{k}{2},j}\bigr)\indicator_{\frac{k}{2},j}\Bigr \rVert_{2}^2 \\
        &= \frac{1}{(1-\gamma)^2} \, \sum _{k\in \Z} \sum
        _{j=1}^{j_{\max}(\frac{k}{2})} |f(x_{\frac{k}{2},j})|^2
        w_{\frac{k}{2},j} \, ,
    \end{align}
    where the weights are given by  
    \begin{equation}\label{weights}
        w_{\frac{k}{2},j} =  \int \indicator_{\frac{k}{2},j}.
    \end{equation}
    To get the right-hand side of the inequality \eqref{eq:c1} we use the previous computations and we obtain
    \begin{align}
        \sum _{k\in \Z} \sum_{j=1}^{j_{\max}(\frac{k}{2})}|f(x_{\frac{k}{2},j})|^2 w_{\frac{k}{2},j}
        & = \Bigl \lVert \sum_{k \in \Z} \sum_{j = 1}^{j_{\max}(\frac{k}{2})}f\bigl(x_{\frac{k}{2},j}\bigr)\indicator_{\frac{k}{2},j}\Bigr \rVert_{2}^2\\
         &\leq \Bigl ( \norm{f}_2 + \Bigl \lVert \sum_{k \in \Z} \sum_{j = 1}^{j_{\max}(\frac{k}{2})}f\bigl(x_{\frac{k}{2},j}\bigr)\indicator_{\frac{k}{2},j} -f \Bigr \rVert_{2} \Bigl)^2.
    \end{align}
    By the inequalities \eqref{ineq:diff} and \eqref{ineq:boundI-Af}, we directly have
    \begin{equation}
         \sum _{k\in \Z} \sum_{j=1}^{j_{\max}(\frac{k}{2})}|f(x_{\frac{k}{2},j})|^2 w_{\frac{k}{2},j} 
         \leq (1 + \gamma)^2 \norm{f}_2^2
    \end{equation}
\end{proof}

\begin{remarks} { \rm
    (i) Note that the separation of the points is not a requirement for the proof. In fact, a local variation of the density can be addressed by using the adaptive weights method (sometimes called ``density compensation
    factors'' \cite{gs14}). 
    If the sampling points are separated, i.e, $\inf _{\lambda , \mu \in \Lambda , \lambda \neq \mu }|\lambda - \mu | = \nu >0$, then $w_{\frac{k}{2},j}\geq \nu >0$ and \eqref{eq:c1} becomes the standard sampling inequality
    \begin{equation}\label{eq:c1a}
        A'\norm{f}_2^2 \leq  \sum_{k \in \Z} \sum_{j = 1}^{j_{\max}(\frac{k}{2})}|f(x_{\frac{k}{2},j})|^2
        \leq B'\norm{f}_2^2 \qquad \text{ for all } f\in \pw \, .
    \end{equation}
    
    (ii) The same proof can be employed when substituting a Wilson orthonormal basis with a Wilson Riesz basis. In this case, we use the expansion $f= \sum _{n,l} \langle f, \tilde{\psi}_{n,l}\rangle \psi_{n,l}$,  where $\{\tilde{\psi}_{n,l} \}$ is the biorthogonal basis of $\{\psi_{n,l} \}$. In \eqref{ineq:boundI-Af} we then obtain $\sum _{n\in \Z} \|P_n\|_2^2 = \sum _{n,l} | \langle f, \tilde{\psi}_{n,l}\rangle|^2 \leq B \|f\|_2^2$ for $f\in \pw $ with the Riesz basis constant $B$. The rest of the proof is identical. 
    
    (iii) Except for very special cases, as in
    ~\cite{asr17,gr92,wa87}, the adaptive weights method does not
    produce optimal results and fails to come close to the necessary
    sampling density. To stick to the essentials, we have therefore
    not aimed to optimize (and complicate) the proof at all steps. The
    estimates can be slightly sharpened in several places, but this
    does not seem relevant in the numerical simulations. 
    
    (iv) In~\cite{ag23} we formulated a version that uses windows with
    $\supp (g) \subseteq [-1/2 - \epsilon , 1/2+\epsilon ]$ and found
    a different sufficient condition for sampling. See
    also~\cite{an24}.   } 
\end{remarks}

\begin{example} \label{ex:c1}
    Consider the window  $g(x) = \sqrt{2}\cos(\pi x)\indicator_{[-1/2,1/2]}(x)$, as plotted in Figure \ref{fig:CosWind}. Using the characterization (iii) of Theorem~\ref{WB_char} it is easy to see that the associated Wilson system $\{ \psi _{n,l}\}$ is an orthonormal basis for $L^2(\R)$.
    Although the function $g$  lacks differentiability at the points
    $\{-1/2, 1/2\}$, the conclusion of Theorem~\ref{thm:suffcond}
    remains applicable. The proof requires only that
    $g$ is 
    differentiable on  each interval $(k/2, k/2+1/2)$ with one-sided
    derivatives at $k/2$, $k\in \Z $.
    
    This Wilson basis reveals an important point of Theorem~\ref{thm:suffcond} and shows that the local quantity $\mu_{k/2} =  \max_{n =(k-2m,k+2m+1)\cap \Z} \big(b(n)+1\big)$ must occur in the sufficient condition. 
    
    Since $\supp(g) = (-1/2, 1/2)$, only two translates of $g$ overlap. Thus the restriction of a general function $f=\sum_n P_n(x) g(x-n/2)$ to  $[0,1/2]$ (or any other interval $[k/2,k/2+1/2]$) is
    \begin{align*}
        f\Big| _{[0,\frac{1}{2}]} &= P_0(x) g(x)+  P_{1}(x)g\Big(x-\frac{1}{2}\Big)  \\
        &=P_0(x) \cos \pi x + P_1(x) \sin \pi x  \\
        &=\frac{e^{-i\pi x}}{2} \Big( P_0(x) (e^{2\pi i x} + 1) -i P_1(x) (e^{2\pi i x} - 1)\Big)
    \end{align*}
    If $\mathrm{deg}\, P_0 = b(0)$ and $\mathrm{deg}\, P_1 = b(1)$, then $e^{-i\pi x} f\Big| _{[0,\frac{1}{2}]}$ is a trigonometric polynomial of degree $\max \{b(0), b(1)\} +1$, which is precisely definition~\eqref{def:mu}. The formulation of Theorem~\ref{thm:suffcond} is a far-reaching generalization of this observation and highlights the importance of the  value $\mu_{k/2}$ as a more accurate measure of local bandwidth compared to $b(k)$.
\end{example}

\section{Numerical simulations}\label{Sec5}
In this section, we present various sampling  scenarios and the results of numerical reconstructions. The main point is the numerical and pictorial illustration of our notion of variable bandwidth. We hope to convince the reader that the spaces $\pw $ are meaningful and useful notions to capture variable bandwidth. 

In principle the proof of Theorem~\ref{thm:suffcond} provides a reconstruction procedure as follows: from the samples $\{f(x_{\frac{k}{2},j})\}$ we form the approximation operator $Af$ and then invert $A$ to obtain the reconstruction $f= A^{-1} (Af)$. Since this reconstruction requires the orthogonal projection $P$ onto $\pw$ and the inversion of $A$, this method is not practical.

A second reconstruction method can be based on the sampling inequality~\eqref{eq:c1}. Since $\pw$ is a reproducing kernel Hilbert space, \eqref{eq:c1} can be read as the frame inequality for the reproducing kernels associated to the sampling points. Then any off-the-shelf frame algorithm~\cite{ch03} yields a reconstruction.  

In both cases one still has to deal with a numerical discretization from the infinite dimensional space $\pw$ (requiring infinitely many data) to some finite-dimensional problem that uses only finite information as input. One of the advantages of our model of variable bandwidth is that such a discretization is not necessary! Precisely, if the window $g$ generating the Wilson basis has compact support and if the samples are taken from an interval $I$, then we may try to find a reconstruction or approximation in the finite-dimensional space $\pwi $. In this case, a reconstruction amounts to the solution of a finite-dimensional linear system and can be treated with numerical linear algebra.  

\subsection{Matrix formulation of the reconstruction.}

Assume that $g$ has compact support in $[-m,m]$. Let $I = [\alpha,\beta]$ and recall that
$$
\pwi =  \spn\{\psi_{n,l} : \supp(\psi _{n,l}) \subseteq I\}
$$
is the local version of the variable bandwidth space $\pw$. 
According to the proof of Theorem~\ref{NCW:compact}, $\pwi $ is a finite-dimensional subspace  of $PW_b^2(g, \R)$ consisting of those functions $f\in \pw $ with $\supp(f) \subseteq I$.

Let $\Lambda = \{x_j: j=1, \dots , L\}$ be a sampling set contained in $I$. 

To reconstruct a function $f \in PW_b^2(g,I)$ from its samples $f(x_j)$, we need to solve the linear system of equations 
\begin{equation}\label{pwi_system}
    \sum_{\supp(\psi _{n,l})\subseteq I} c_{n,l}\psi_{n,l}(x_j) =
    \sum_{n \in [\alpha+m,\beta-m]\cap \Z}c_{n,0}\psi_{n,0}(x_j) +
    \sum_{n/2 \in [\alpha+m,\beta-m]\cap \Z}\sum_{l =
    1}^{b(n)}c_{n,l}\psi_{n,l}(x_j) = f(x_j) \, ,
\end{equation}
$j=1, \dots , L $,  for the coefficients $c_{n,l}$. 

This linear system can be written in matrix notation as
\begin{equation}
    Uc = y,
\end{equation}
where $y$ is the input  sequence $\{f(x_j)\}$, and the matrix $U$ has the entries 
\begin{equation}\label{eq:matrixU}
    U_{j,(n,l)} = \psi_{n,l}(x_j).
\end{equation}

In general the samples are not exact and may contain measurement errors or noise, so that the input vector $y$ is given by entries  $y_j = f(x_j) + \epsilon _j$. It may also happen that the sampled function is not contained in $\pwi$. In both cases we seek a function $\tilde{f}\in \pwi $ that approximates the data vector $y$ optimally. In the absence of additional information, such as  sparsity, we use a least squares approach and seek $\tilde{f}$ as the solution of
\begin{align}
    \tilde{f} 
    & = \mathrm{argmin} _{f\in \pwi } \sum _{j} |\tilde{f}(x_j) -  y_j|^2 \notag \\
    & = \mathrm{argmin} _{c_{n,l} } \sum _{j} \big|\sum _{n,l: \supp(\psi_{n,l})\subseteq I} c_{n,l} \psi_{n,l} (x_j) -  y_j\big|^2 \\
    & = \mathrm{argmin} _{c_{n,l} } \|Uc-y\|_2^2 \, .  \label{eq:c3}
\end{align}

The solution to the matrix least squares problem uses the Moore-Penrose pseudo-inverse matrix $U^\dagger$ of  $U$ and is given by 
\begin{equation}\label{pinv}
    c = U^\dagger  y.
\end{equation}

The approximating function $\tilde{f}\in \pwi $ is then
\begin{equation}\label{eq:c5}
  \tilde{f} = \sum _{\supp (\psi _{n,l})\subseteq I} c_{n,l} \psi _{n,l} \, 
\end{equation}
and solves \eqref{eq:c3}. 
If the data are exact, $y_j = f(x_j)$ for $f\in \pwi$, then
$\tilde{f} = f$ and the reconstruction is exact.

For a  detailed exposition of the least squares approximation method, see \cite{bj90, gvlc96, lhr74}.

\subsection{Numerical set-up.} \label{sec:51} For the numerical simulations we adopt \eqref{pinv} and \eqref{eq:c5} as our starting point. Next we explain the numerics and the selection of parameters in the experiments. 

\vspace{2mm}

\textbf{Window function.} For the numerical simulations we use the
window function   \\ $g(x) = \sqrt{2}\cos(\pi x)\indicator_{[-1/2,1/2]}(x)$ from Example~\ref{ex:c1} and Figure \ref{fig:CosWind}. Then $g$ generates an orthonormal Wilson basis. 
In this case the constants of Theorem~\ref{thm:suffcond} are readily computed and are listed in Table \ref{tab:PropCosWind}. 

\vspace{2mm}

\begin{minipage}{\textwidth}
    \begin{minipage}[c]{0.49\textwidth}
        \centering
        \includegraphics[width=70mm]{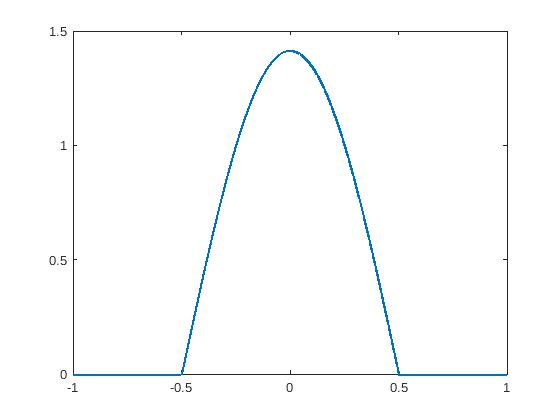}
        \captionof{figure}{Window\\
            $\sqrt{2}\cos(\pi x)\indicator_{[-1/2,1/2]}(x)$ used to generate $\pw $.}
         \label{fig:CosWind}
    \end{minipage}
    \hfill
    \begin{minipage}[c]{0.49\textwidth}
        \centering
        \begin{tabular}[b]{cc}\hline
            \toprule
            Window & $\sqrt{2}\cos(\pi x)\indicator_{[-1/2,1/2]}(x)$ \\
            \midrule
            $m$ & 0.5\\
            $\norm{g}_\infty$ &  $\sqrt{2}$\\
            $\norm{g'}_\infty$ & $\sqrt{2} \pi$\\ 
            \bottomrule
        \end{tabular}
        \captionof{table}{Constants of the window needed in Theorem \ref{thm:suffcond}.}
        \label{tab:PropCosWind}
    \end{minipage}
\end{minipage}

\bigskip

\textbf{Numerical error.} 
In the numerical simulations we start with a known function $f \in \pwi $ and then compute a reconstruction $\tilde{f}$ from its samples. To assess the quality of the reconstruction, we evaluate the discretized error on a uniform grid $\{\gamma n :n \in \Z\}$ on the interval of reconstruction $I$. 
We use the relative errors in the $\ell ^2$-norm and in the $\ell ^\infty $-norm defined by  
\begin{align}
    & E(f,2) \coloneqq \frac{\Big \{ \sum_{\gamma n \in I} |\tilde{f}(\gamma n)-f(\gamma n)|^2\Big \}^{1/2}}{\Big \{ \sum_{\gamma n \in I} |f(\gamma n)|^2\Big \}^{1/2}},\\
    & E(f,\infty) \coloneqq \frac{\max_{\gamma n \in I} |\tilde{f}(\gamma n)-f(\gamma n)|}{\max_{\gamma n \in I} |f(\gamma n)|},
\end{align}
where the step-size $\gamma$ is chosen to be of the order $\mathcal{O}(10^{-4})$.

\vspace{2mm}

\textbf{Least squares approximation.} The numerical approximation is
carried out with MATLAB, and the solution to \eqref{eq:c3} is obtained with
the MATLAB function  \verb|pinv|. Since our aim is to validate the
notion of variable bandwidth with Wilson basis, we have not tried to
optimize the numerical part. To set up the sampling matrix
\eqref{eq:matrixU} we choose a suitable enumeration of the indices 
$(n,l)$ satisfying  $\supp(\psi _{n,l}) \subseteq I$, namely,  
$$i = \sum_{\substack{r < n : \\ \supp(\psi_{r,l})\subseteq I}}b(r) + \#\{r\leq n: \supp(\psi_{r,0}) \subseteq I \} + l \, .$$
 The entries of \eqref{eq:matrixU} are identified by
$$ U_{j, (n,l)} = U_{j,i}.$$
As always,  the generation of $U$ is the most expensive part of the
computation. 

\vspace{2mm}

\textbf{Test signal.}
We sample a signal on the interval $I = [0,6]$, and we 
choose a suitable test signal $f_1 \in PW_b^2(g,I)$ by generating its coefficients.  
To begin, we generate a bandwidth sequence $b : \mathbb{Z} \to \mathbb{N}$, where $b(n) \sim \mathcal{U}_{[20,500]}$ for $\{n \in \Z : \supp(\psi_{n,l}) \subseteq I\}$, with $\mathcal{U}$ denoting the uniform distribution of integers in the range $[20,500]$. The resulting bandwidth sequence is $b = (102, 35, 499, 444, 341, 111, 197, 241, 492, 95, 431)$, illustrated by the yellow bands in Figure \ref{fig:Case1_spec}.
For clarity, we assume that only the maximum frequency on each interval $[k/2,k/2+1/2]$ is active, so that the test signal is of the form~\footnote{Note that this is a sparse signal that could be reconstructed with compressed sensing methods. Our choice $f_1$ is motivated by the need to obtain a reasonable and interpretable plot (Figure~\ref{fig:Case1_orig}). General signals in $\pwi$ are \emph{not} sparse, and the least square solution is the only option.}
\begin{equation}\label{eq:c7}
    f_1 = \sum_{n=1}^{11} c_{n,b(n)}\psi_{n,b(n)} \in \pwi  \, .  
\end{equation}
We randomize the coefficients by setting for $\{n \in \Z : \supp(\psi_{n,l}) \subseteq I\}$

\begin{equation}\label{def:GenerateCoeff}
    c_{n,b(n)}\sim \mathcal{U}_{[-200,200]} \cdot 10^{-2} \text{ and all the other coefficients are set to be zero.}
\end{equation}

By \eqref{eq:dim_pwi}, the dimension of $\pwi $ is given by 
\begin{align}\label{dim_case1}
    \dim(\pwi ) & = \dim (\spn \{\psi_{n,l} : \supp(\psi_{n,l}) \subseteq I\})\\
    & = \#([0.5,5.5] \cap \Z) + \sum_{n/2 \in [0.5,5.5]} b(n) = 5 + \sum_{n=1}^{11} b(n)= 2993.
\end{align} 

Figure \ref{fig:Case1_OS_banal} represents the generated signal $f_1 \in PW_b^2(g,I)$ that needs to be reconstructed. The local frequency spectrum is visualized by means of the spectrogram of $f_1$. 

\begin{figure}[h!]
    \centering
    \begin{subfigure}[b]{0.45\textwidth}
         \centering
         \includegraphics[width=70mm]{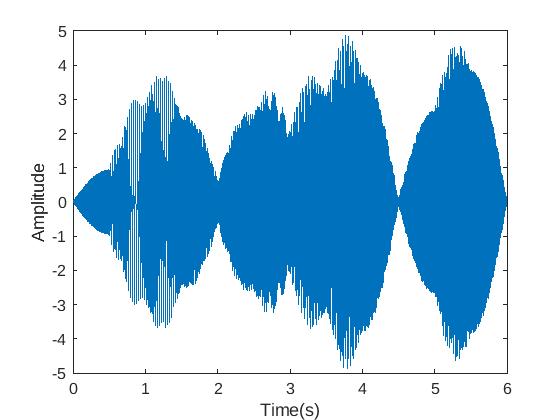}
         \caption{Function $f_1\in PW_b^2(g,I)$.}
         \label{fig:Case1_orig}
     \end{subfigure}
     \hfill
     \begin{subfigure}[b]{0.45\textwidth}
         \centering
         \includegraphics[width=70mm]{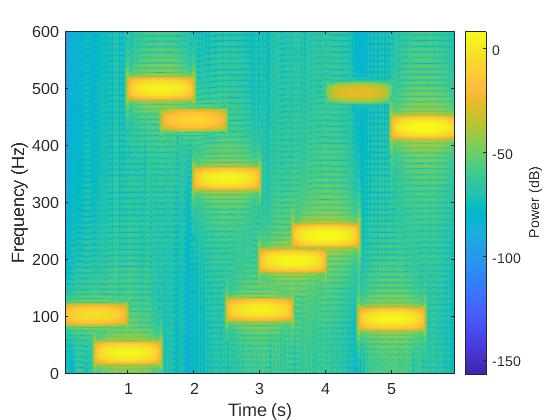}
         \caption{Spectrogram of $f_1$.}
         \label{fig:Case1_spec}
     \end{subfigure}
    \caption{Test signal $f_1\in PW_b^2(g,I)$ generated by the bandwidth sequence\\ 
    $b = (102,35,499,444,341,111,197,241,492,95,431)$, and coefficients sequence of the form \eqref{def:GenerateCoeff}.}
    \label{fig:Case1_OS_banal}
\end{figure}

\vspace{2mm}

\textbf{Sampling set.} For every signal we use two types of sampling sets. One set is chosen to satisfy the sufficient conditions of Theorem~\ref{thm:suffcond}, the other type is not covered by the theory.  

Since in our example the signal space $\pwi$ is assumed to be known, 
we can determine the local maximal gaps $\d_{k/2}$ on $I$  in advance by means of \eqref{eq:SuffCond}. 
We define 
\begin{equation}
    \mathcal{M} \coloneqq \Big\{k \in \Z: I \cap \Big[\frac{k}{2},\frac{k+1}{2}\Big) \neq \emptyset \Big\}
\end{equation}
to be the set of indices $k \in \Z$, for which we have to compute $\d_{k/2}$.
With $\mathcal{M} = \{0,\dots,11\}$, we choose the sampling set  
\begin{equation}\label{def:Lambda}
    \Lambda_{1} \coloneqq \Big\{x_{k/2,j} = \frac{k}{2} + \d_{k/2}j : \text{ } k \in \mathcal{M} , \text{ } j = 0,\dots,\Big\lfloor\dfrac{1}{2\d_{k/2}}\Big\rfloor\Big\}.
\end{equation}
The sampling points are equispaced according to the local bandwidth within the interval $[\frac{k}{2}, \frac{k}{2} + \frac{1}{2}]$ for every $k \in \mathcal{M}$, and by construction $\Lambda _1$ satisfies the conditions of Theorem~\ref{thm:suffcond}. 
We then evaluate the Wilson basis elements on  $\Lambda_{1}$ and
assemble them in the matrix $U$ according to \eqref{eq:matrixU} 
and then solve \eqref{pinv} and \eqref{eq:c5} by means of the MATLAB function \verb|pinv|.  

The reconstruction $\tilde{f}_1$ is accurate up to machine precision. Table \ref{tab:Err_Recons_cos_NoBoundEff_banal} reports the relative errors in the $\ell ^2$-norm and $\ell^\infty$-norm. The pointwise difference between the original function $f_1$ and its reconstruction $\tilde{f}_1$ is plotted in Figure \ref{fig:Case1_DiffORS_banal}. 

\vspace{2mm}

\begin{minipage}{\textwidth}
    \begin{minipage}[c]{0.49\textwidth}
        \centering
        \includegraphics[width=70mm]{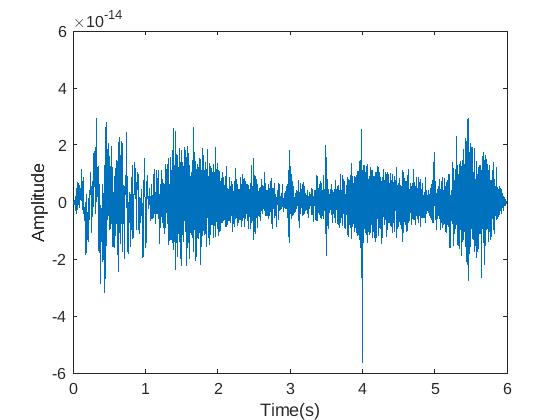}
        \captionof{figure}{Difference between the original signal $f_1$ and the reconstruction $\tilde{f}_1$. Maximal gap condition \eqref{eq:SuffCond} is satisfied.} 
        \label{fig:Case1_DiffORS_banal}
    \end{minipage}
    \hfill
    \begin{minipage}[c]{0.49\textwidth}
        \centering
        \begin{tabular}{cc} 
            \toprule
            \multicolumn{2}{c}{Relative errors}\\
            \midrule
            $E(f,2)$ & $3.776 \times 10^{-15}$\\
            $E(f,\infty) $ & $6.016 \times 10^{-15}$\\
            \bottomrule
        \end{tabular}
          \captionof{table}{Relative errors in $\ell^2$-norm and $\ell^\infty$-norm.}
          \label{tab:Err_Recons_cos_NoBoundEff_banal}
    \end{minipage}
\end{minipage}

\vspace{2mm}

However, the conditions of Theorem~\ref{thm:suffcond} are much too
pessimistic. According to Theorem~\ref{NCW:compact} we need at least
$\mathrm{dim}(\, \pwi)$ samples, whereas $\# \Lambda _1 =
12121$. Consequently, the redundancy, or oversampling factor, is given
by 
\begin{equation}
     q_{1} = \frac{\# \Lambda_1 }{\dim(\pwi)} = 4.05.
\end{equation}
It is therefore not surprising that with such a high oversampling the reconstruction works perfectly. 

For the sake of completeness, we conducted the same experiment with a function $f_2$ that shares the same bandwidth sequence as before but has full set of non-zero coefficients as follows: 

\begin{equation}
    c_{n,l}\sim \mathcal{U}_{[-200,200]} \cdot 10^{-2}, \text{ for } \{ n,l: \supp(\psi_{n,l}) \subseteq I\} 
\end{equation}

Consequently, the test signal can be expressed as
\begin{equation}\label{eq:f2}
    f_2 = \sum_{n=1}^{5} c_{n,0}\psi_{n,0} + \sum_{n=1}^{11} \sum_{l = 1}^{b(n)} c_{n,l}\psi_{n,l} \in \pwi  \, .  
  \end{equation}
  We use the same sampling set $\Lambda _1$ as before, so that the
  redundancy is again  $q_2 = q_1 = 4.05$. Then the
sufficient condition of Theorem~\ref{thm:suffcond} provides again a good
reconstruction, with relative errors in the $\ell^2$-norm and
$\ell^\infty$-norm of the order $\mathcal{O}(10^{-15})$. 
 The pointwise difference between the function $f_2$ and its reconstruction $\tilde{f}_2$ is plotted in Figure \ref{fig:Case1_DiffORS_Prec_NonZeroCoeff}.
Table \ref{tab:Err_Recons_NoBoundEff_NonZeroCoeff} reports the relative errors in the $\ell ^2$-norm and $\ell^\infty$-norm.

\begin{minipage}{\textwidth}
  \begin{minipage}[c]{0.49\textwidth}
    \centering
    \includegraphics[width=70mm]{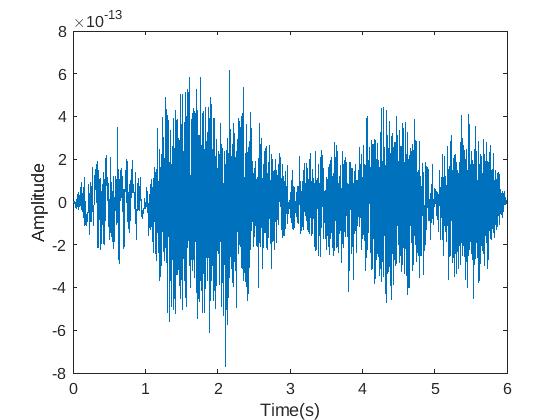}
    \captionof{figure}{Difference between the original signal $f_2$
      and the reconstruction $\tilde{f}_2$. 
      Maximal gap condition \eqref{eq:SuffCond} is satisfied.} 
    \label{fig:Case1_DiffORS_Prec_NonZeroCoeff}
  \end{minipage}
  \hfill
  \begin{minipage}[c]{0.49\textwidth}
    \centering
    \begin{tabular}{cc} 
        \toprule
        \multicolumn{2}{c}{Relative errors}\\
        \midrule
        $E(f,2)$ & $5.218 \times 10^{-15}$\\
        $E(f,\infty) $ & $4.956 \times 10^{-15}$\\
        \bottomrule
    \end{tabular}
      \captionof{table}{Relative errors in $\ell^2$-norm and $\ell^\infty$-norm.} 
      \label{tab:Err_Recons_NoBoundEff_NonZeroCoeff}
    \end{minipage}
\end{minipage}

\vspace{2mm}

In the following and more important experiment, we test sampling sets that are not covered by the theoretical conditions. Recall that $\mu_{k/2} = \max_{n =(k-2m,k+2m+1)\cap \Z} \big(b(n)+1\big)$ is the active local bandwidth. We now choose a sampling set of the form
\begin{equation} \label{def:Gamma_n}
    \Gamma_1^\rho \coloneqq \Big\{x_{k/2,j} = \frac{k}{2} + \frac{1}{2}\frac{1}{\rho\mu_{k/2}}j: \text{ } k \in \mathcal{M}, \text{ } j = 0,\dots,\floor{\rho\mu_{k/2}}\Big\}.
\end{equation}
with a parameter $\rho >0$. If $\rho$ is sufficiently large,  precisely $\rho\geq D/\pi = 5.66$, then $\Gamma_1^\rho $ is covered by Theorem \ref{thm:suffcond}. For $\rho < D/\pi$ we do not have any theoretical guarantees for the quality of the reconstruction.
The errors of the least square approximation are illustrated in Figure \ref{fig:Case1_Err} for different sampling sets $\Gamma_1^\rho$ with parameters $\rho = 0.7,0.8,\dots,2.4,2.5$. In all cases $\rho \geq 1$, machine precision is reached. The case $\rho = 1$ corresponds to an oversampling parameter of $1.43$ and the starting point $\rho = 0.7$ represents an oversampling parameter of $1.004$.

\begin{figure}
    \centering
    \begin{subfigure}[b]{0.45\textwidth}
         \centering
         \includegraphics[width=70mm]{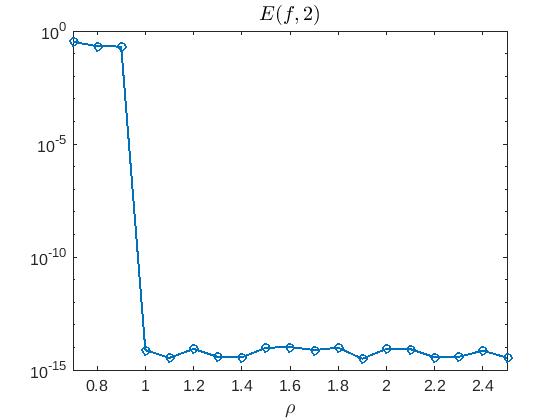}
         \caption{Relative error in the $\ell ^2$-norm.}
         \label{fig:Case1_Err2_banal}
     \end{subfigure}
     \hfill
     \begin{subfigure}[b]{0.45\textwidth}
         \centering
         \includegraphics[width=70mm]{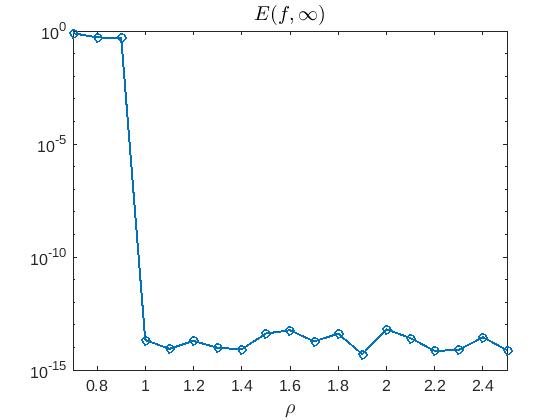}
         \caption{Relative error in the $\ell ^\infty$-norm.}
         \label{fig:Case1_ErrInfinity_banal}
     \end{subfigure}
    \caption{Behavior of the reconstruction of $f_1 \in PW_b^2(g,I)$ with increasing parameter $\rho = 0.7,0.8,\dots,2.4,2.5$.}
    \label{fig:Case1_Err}
\end{figure}

\subsection{Local reconstruction of a function of $PW_b^2(g,\R)$.}\label{Subsec5.2}
We now modify the scenario and assume that a function  $f_3\in \pw $ is given. We try to reconstruct or approximate $f_3$ on an interval $I$ from samples in $I$. This  situation is different, because a sample $x\in I$ sees all basis functions $\psi _{n,l}$ with $x \in \supp(\psi _{n,l})$. Consequently, we have to solve the linear equations
\begin{equation}\label{eq:c6}
    \sum _{(n,l): \supp(\psi_{n,l}) \cap I \neq \emptyset } c_{n,l} \psi_{n,l} (x_j) = y_j \qquad j=1, \dots, L \, .
\end{equation}
For $I=[0,6]$ and the window $g$ from Example~\ref{ex:c1} with $\supp(g)\subseteq [-1/2,1/2]$, the resulting equations are
\begin{equation}
    \sum _{n=0}^6 c_{n,0} \psi_{n,0} (x_j) + \sum _{n=0}^{12} \sum _{l=1}^{b(n)} c_{n,l} \psi_{n,l} (x_j) = y_j \qquad j=1, \dots, L \, .  
\end{equation}
The dimension of this problem (the number of variables) is now 
\begin{align}
    \dim(\spn\{\psi_{n,l} : \supp (\psi _{n,l}) \cap I \neq \emptyset\} )
    & = \# \{n \in \Z : \supp(\psi_{n,0}) \cap I\} + \sum_{\substack{n \in \Z :\\ \supp(\psi_{n,l})\cap I}}b(n)\\
    & = 7 + \sum_{n = 0}^{12}b(n)= 3101.
\end{align}

Figure \ref{fig:Case2_OS} illustrates the function $f_3\indicator_I$ that we aim to reconstruct.

\begin{figure}[h!]
    \centering
    \begin{subfigure}[b]{0.45\textwidth}
         \centering
         \includegraphics[width=70mm]{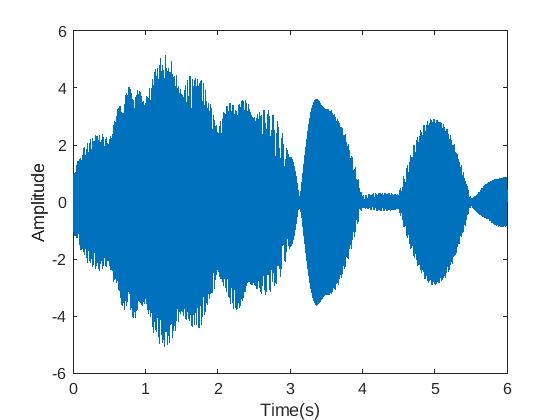}
         \caption{Function $f_3\indicator_I$.}
     \end{subfigure}
     \hfill
     \begin{subfigure}[b]{0.45\textwidth}
         \centering
         \includegraphics[width=70mm]{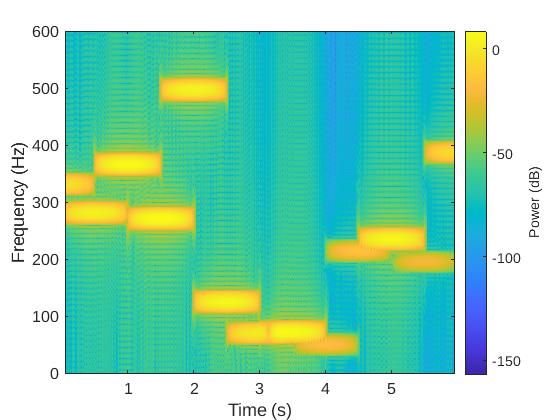}
         \caption{Spectrogram of $f_3\indicator_I$.}
     \end{subfigure}
    \caption{Signal $f_3\indicator_I$ generated by the bandwidth sequence\\
    $b=(331,281,366,271,497,125,70,72,50,214,235,195,387)$ and coefficients sequence of the form \eqref{def:GenerateCoeff} with the modified index set.} 
    \label{fig:Case2_OS}
\end{figure}

Again we use two types of sampling sets. The first set  $\Lambda_3$ is chosen as in \eqref{def:Lambda} so that it formally satisfies the sufficient conditions of Theorem \ref{thm:suffcond}. The redundancy is $q_3 = 3.11$.

The pointwise error between $f_3\indicator_I$ and its approximation $\tilde{f}_3\in \spn\{\psi_{n,l} : \supp (\psi _{n,l}) \cap I \neq \emptyset \}$ is plotted in Figure \ref{fig:Case2_DiffORS_BigSpace_banal}, and the relative errors are reported in Table \ref{tab:Case2_RelErr}. 

\vspace{2mm}

\begin{minipage}{\textwidth}
    \begin{minipage}[c]{0.49\textwidth}
        \centering
        \includegraphics[width=70mm]{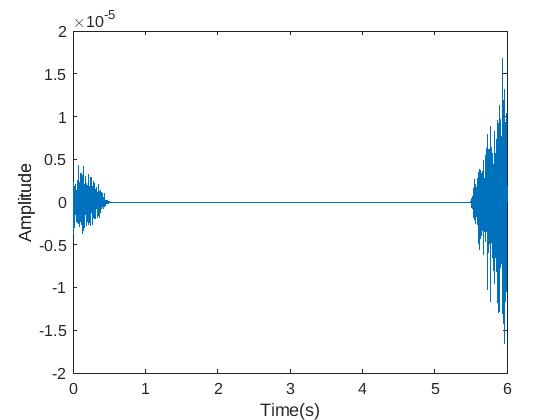}
        \captionof{figure}{Difference between $f_3\indicator_I$ and $\tilde{f}_3 \in \spn\{\psi_{n,l} : \supp (\psi _{n,l}) \cap I \neq \emptyset \}$.}
        \label{fig:Case2_DiffORS_BigSpace_banal}
    \end{minipage}
  \hfill
  \begin{minipage}[c]{0.49\textwidth}
        \centering
        \begin{tabular}{cc}
            \toprule
            \multicolumn{2}{c}{Relative errors}\\
            \midrule
            $E(f,2)$ & $7.264 \times 10^{-7}$\\
            $E(f,\infty) $ & $3.273 \times 10^{-6}$\\
            \bottomrule
        \end{tabular}
        \captionof{table}{Relative errors in $\ell ^2$-norm and $\ell^\infty$-norm.}
        \label{tab:Case2_RelErr}
    \end{minipage}
\end{minipage}

\vspace{2mm}

We see that the reconstruction is highly accurate of the order $\mathcal{O}(10^{-14})$ in the central part of the signal, but oscillates with a pointwise error of a significantly large order $\mathcal{O}(10^{-5})$ at the boundary of $I$. This is not surprising: the samples near the boundary of $I$, in our case $\Lambda_3 \cap ([0,0.5] \cup [5.5,6])$ are needed to recover the coefficients of all terms $\psi_{n,l}$ with $\supp (\psi_{n,l}) \cap I \neq \emptyset$ and $\supp(\psi_{n,l})\not \subseteq  I$. Even if the sufficient conditions of Theorem~\ref{thm:suffcond} are satisfied, there may not be enough samples near the boundary, and the sampling matrix has some rank deficiency. 

To mitigate this effect, we add sampling points outside the interval $I$ and generate a sampling set that satisfies the sufficient conditions \eqref{def:Lambda} on the enhanced interval $\tilde{I} = I \cup [-0.5,0] \cup [6,6.5]$. The resulting oversampling parameter is $3.77$.

\vspace{2mm}

\begin{minipage}{\textwidth}
    \begin{minipage}[c]{0.49\textwidth}
        \centering
        \includegraphics[width=70mm]{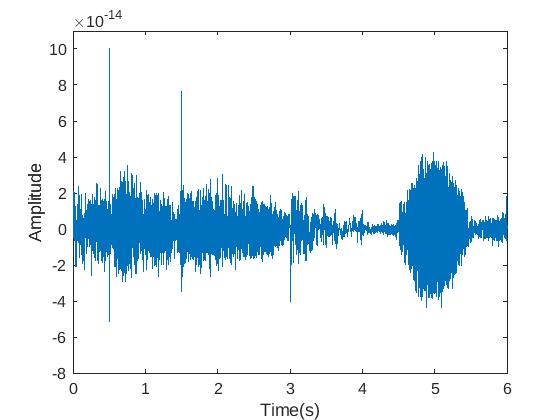}
        \captionof{figure}{Difference between $f_3\indicator_I$ and its reconstruction by sampling in $[-0.5,6.5)$.}
        \label{fig:Case2_DiffORS_BigInt}
    \end{minipage}
    \hfill
    \begin{minipage}[c]{0.49\textwidth}
        \centering
        \begin{tabular}{cc} 
            \toprule
            \multicolumn{2}{c}{Relative errors}\\
            \midrule
            $E(f,2)$ & $6.235\times 10^{-15}$\\
            $E(f,\infty) $ & $1.953 \times 10^{-14}$\\
            \bottomrule
        \end{tabular}
        \captionof{table}{Relative errors in $\ell ^2$-norm and $\ell^\infty$-norm.}
        \label{tab:Case2_RelErr_BigInt}
    \end{minipage}
\end{minipage}

\vspace{2mm}

As shown in Figure \ref{fig:Case2_DiffORS_BigInt} and Table
\ref{tab:Case2_RelErr_BigInt}, the numerical reconstruction on $I$
(not $\tilde{I}$) is now  again accurate to machine precision. The theoretical explanation is simple: by sampling on the extended interval $\tilde{I}$, we replace the given signal $f_3\in \pw $ by a function in the local variable bandwidth space $f_4 \in PW _b^2(g,\tilde{I})$ such that $f_3\big| _I = f_4\big| _I$. We are now in the situation of the first scenario from Section~\ref{sec:51} and try to recover $f_4\in PW_b^2(g,\tilde{I})$ from
samples in $\tilde{I}$. 

Additionally, we investigate the reconstruction from sampling sets $\Gamma_3^\rho$ as in \eqref{def:Gamma_n} for $\rho >0$. Figure \ref{fig:Case2_Err} displays the relative errors in the $\ell^2$-norm and $\ell^\infty$-norms for $\rho = 0.8,0.9,\dots,2.4,2.5$.

\begin{figure}[h!]
    \centering
    \begin{subfigure}[b]{0.45\textwidth}
         \centering
         \includegraphics[width=70mm]{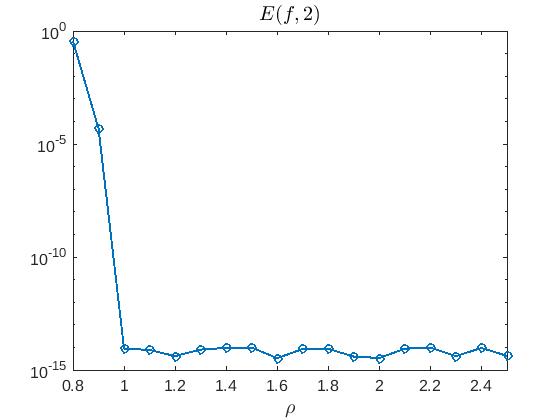}
         \caption{Relative error in the $\ell^2$-norm of the reconstruction by sampling in $[-0.5,6.5)$.}
         \label{fig:Case2_Err2_BigSpace_banal}
     \end{subfigure}
     \hfill
     \begin{subfigure}[b]{0.45\textwidth}
         \centering
         \includegraphics[width=70mm]{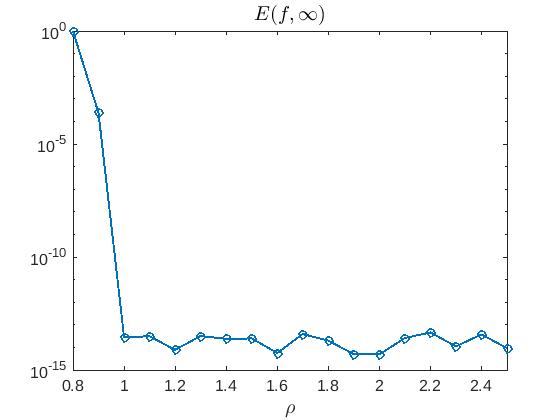}
         \caption{Relative error in the $\infty$-norm by sampling in $[-0.5,6.5)$.}
         \label{fig:Case2_ErrInfinity_BigSpace_banal}
     \end{subfigure}
    \caption{Behavior of the reconstruction in $\spn\{\psi_{n,l} : \supp(\psi_{n,l}) \cap I \neq \emptyset\}$ by sampling in $[-0.5,6.5)$, with increasing parameter $\rho = 0.8,0.9,\dots,2.4,2.5$.}
    \label{fig:Case2_Err}
\end{figure}

The results indicate that the sufficient conditions of Theorem \ref{thm:suffcond} remain pessimistic. For $\rho = 1$, corresponding to an oversampling parameter of $1.33$, the reconstruction already achieves a relative error in the $\ell^2$-norm of the order $\mathcal{O}(10^{-15})$.
 
\subsection{Reconstruction of a chirp.}
In the final example, we aim to reconstruct a linear chirp in the interval $I=[0,6]$, described by the equation 
\begin{equation}\label{eq:chirp}
    p(x) = \sin\Big(\phi(0) + \pi\frac{\omega(T) - \omega(0)}{T}x^2 + 2\pi \omega(0)x\Big),
\end{equation}
where $\phi(0)$ is the initial phase, $\omega(0)$ is the instantaneous frequency at time $0$, and  $\omega(T)$ is the instantaneous frequency at time $T$.
For a linear chirp, the instantaneous frequency function $\omega$ is given by the formula
\begin{equation}
    \omega(t) = \frac{\omega(T) - \omega(0)}{T}t + \omega(0).
\end{equation}
In our simulations, we take the parameters $T = 6$, $\phi(0) = 0$, $\omega(0) = 40 $, and $\omega(T) = 300$ to get the chirp
\begin{equation}
    p(x) = \sin\Big(\frac{130}{3}\pi x^2+80\pi x\Big),
\end{equation}
shown in Figure \ref{fig:Case3_OS}.

\begin{figure}[h]
    \centering
    \begin{subfigure}[b]{0.45\textwidth}
         \centering
         \includegraphics[width=70mm]{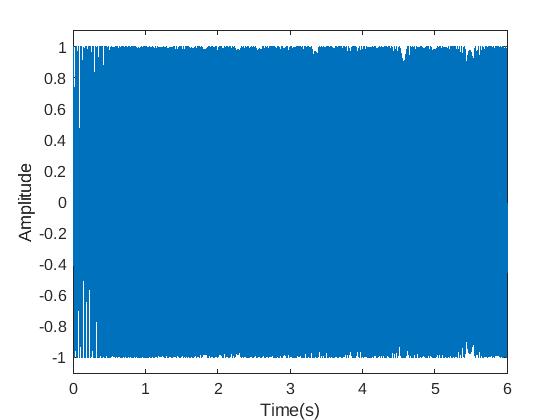}
         \caption{Chirp $p$.}
     \end{subfigure}
     \hfill
     \begin{subfigure}[b]{0.45\textwidth}
         \centering
         \includegraphics[width=70mm]{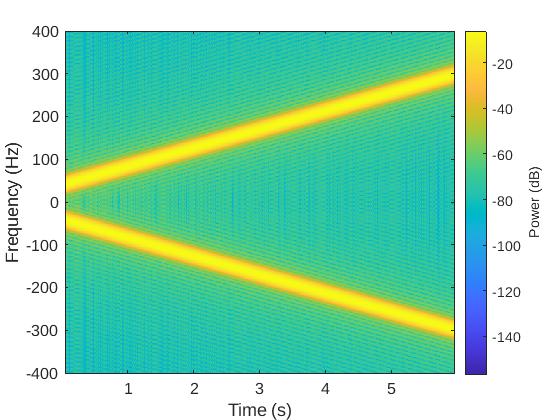}
         \caption{Spectrogram of $p$.}
     \end{subfigure}
    \caption{Chirp $p$ with $\phi(0) = 0$, $\omega(0) = 40$, and $\omega(6) = 300$.}
    \label{fig:Case3_OS}
\end{figure}

In contrast to the previous examples, a general chirp $p$ does not belong to a space $\pw $ of variable bandwidth, but it can be modeled and approximated by a function in $\pw $ with a suitable choice of the local bandwidth $b(n)$.

Since we know the structure of the chirp in advance, we model the
bandwidth sequence $b:\Z  \to \N$, by a linear function 
\begin{equation}
    b(n) \coloneqq \omega \Big(\frac{n}{2} + \frac{1}{2}\Big) +30 = \frac{65}{3}(n+1) + 70, \quad \text{for } n = 0,\dots,12.
\end{equation}
The number $30$ is a safety margin to ensure that the chosen bandwidths contain all the relevant frequencies.
We choose $\spn\{\psi_{n,l} : \supp (\psi _{n,l}) \cap I \neq \emptyset\}$ as the reconstruction space and the dimension of the problem is then
\begin{equation}
    \dim(\spn\{\psi_{n,l} : \supp (\psi _{n,l}) \cap I \neq \emptyset\} ) =  7 + \sum_{n = 0}^{12}b(n) = 2893.
\end{equation}
To mitigate boundary effects, we sample $p$ on the extended interval
$\tilde{I}= [-0.5,6.5]$ and we consider sampling set $\Lambda_5$
satisfying Theorem \ref{thm:suffcond}. 
The oversampling parameter is $q_5 = \# \Lambda_5/\dim(PW _b^2(g,\tilde{I}))= 3.18$. 
The reconstruction algorithm~\eqref{pinv} and \eqref{eq:c5} yields an approximation $\tilde{p}
\in PW_b^2(g,\tilde{I})$ to the original chirp $p$.

\vspace{2mm}
\begin{minipage}{\textwidth}
\hspace{-1cm}
    \raisebox{12pt}{
    \begin{minipage}[c]{0.49\textwidth}
        \centering
        \includegraphics[width=70mm]{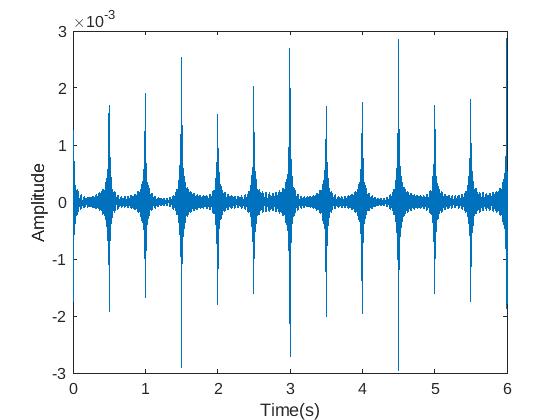}
        \captionof{figure}{Difference between the original chirp $p$ and the reconstruction $\tilde{p}$ using the maximal gap condition and by sampling in $[-0.5,6.5)$.}
        \label{fig:Case3_Diff_ORS}
        \par\bigskip
        \begin{tabular}{cc} 
            \toprule
            \multicolumn{2}{c}{Relative errors}\\
            \midrule
                $E(f,2)$ & $3.926 \times 10^{-4}$\\
                $E(f,\infty) $ & $2.867 \times 10^{-3}$\\
            \bottomrule
        \end{tabular}
        \captionof{table}{Relative errors for $\tilde{p}$.}
        \label{tab:Case3_Diff_OR}
    \end{minipage}
    }
    \hfill
    \raisebox{12pt}{
    \begin{minipage}[c]{0.49\textwidth}
        \centering
        \includegraphics[width=70mm]{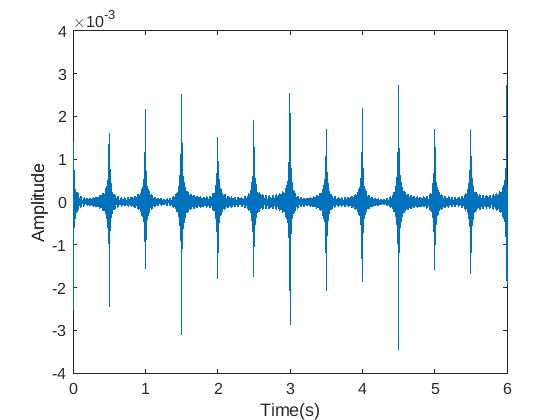}
        \captionof{figure}{Difference between the chirp $p$ and its orthogonal projection $Pp$ onto\\ $\spn\{\psi_{n,l} : \supp (\psi _{n,l}) \cap I \neq \emptyset\}$.}
        \label{fig:Case3_Diff_OP} 
        \par\bigskip
        \begin{tabular}{cc} 
            \toprule
            \multicolumn{2}{c}{Relative errors}\\
            \midrule
                $E(f,2)$ & $3.946 \times 10^{-4}$\\
                $E(f,\infty) $ & $2.731 \times 10^{-3}$\\
            \bottomrule
        \end{tabular}
        \captionof{table}{Relative errors for $Pp$.}
        \label{tab:Case3_Diff_OP} 
    \end{minipage}
    }
\end{minipage} 
\bigskip

Figure \ref{fig:Case3_Diff_ORS} shows the pointwise error between $p\indicator_I$ and  $\tilde{p}\chi_I$. 
Although the error of order $\mathcal{O}(10^{-3})$ seems much larger than in the previous simulations, the result is nearly optimal. The chirp $p$ does not belong to $\pw$ and the best approximation by a function in $\pw $ we can hope for is the orthogonal projection $Pp$. In Figure \ref{fig:Case3_Diff_OP} we therefore plot the pointwise difference between the chirp $p$ and its orthogonal projection onto $\spn\{\psi_{n,l} : \supp (\psi _{n,l}) \cap I \neq \emptyset\}$, where the coefficients $\langle p, \psi_{n,l} \rangle$ arising in $Pp$ are obtained by numerical integration with the trapezoidal method (we used the MATLAB function \verb|trapz|).  
In Tables \ref{tab:Case3_Diff_OR} and \ref{tab:Case3_Diff_OP} the respective relative errors in the $\ell^2$-norm and $\ell^\infty$-norm are presented.

A comparison of the two error plots shows that there is hardly a difference between $p-\tilde{p}$ and $p-Pp$. We conclude that the approximation of $p$ from samples is (almost) the orthogonal projection.
It is interesting to note that the largest errors occur at the half integers $k/2$ for $k \in \Z$. This is to be expected because the chirp is smooth everywhere, whereas the window $g$ from Figure \ref{fig:CosWind} and hence every $f\in \pw $ is continuous, but not differentiable at $\frac{1}{2} \Z $.

Finally, we use the sampling sets  $\Gamma_5^\rho$ from \eqref{def:Gamma_n} and approximate the chirp $p$ from its samples on $\tilde{I}$ for $\rho = 0.9,1,\dots,2.4,2.5$. 

The relative errors in the $\ell^2$-norm and the $\ell^\infty$-norm are displayed in Figure \ref{fig:Case3_Err}. 

\begin{figure}[h!]
    \centering
    \begin{subfigure}[b]{0.45\textwidth}
         \centering
         \includegraphics[width=70mm]{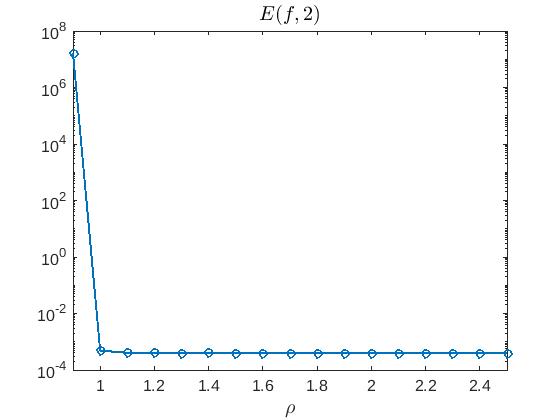}
         \caption{Relative errors in the $\ell^2$-norm of $\tilde{p}$.}
         \label{fig:Case3_Err2}
     \end{subfigure}
     \hfill
     \begin{subfigure}[b]{0.45\textwidth}
         \centering
         \includegraphics[width=70mm]{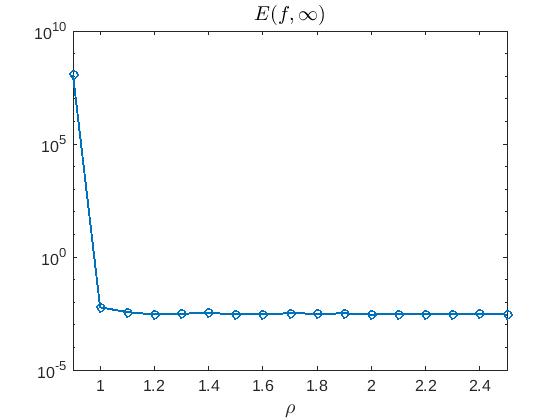}
         \caption{Relative errors in the $\ell^\infty$-norm of $\tilde{p}$.}
         \label{fig:Case3_ErrInfinity}
     \end{subfigure}
    \caption{Behavior of the reconstruction $\tilde{p}$ of the chirp $p$ with increasing  parameter $\rho = 0.9,1,\dots,2.4,2.5$ and by sampling in $[-0.5,6.5)$.}
    \label{fig:Case3_Err}
\end{figure}
The relative errors reach the errors of the orthogonal projection at $\rho = 1$, which corresponds to an oversampling parameter of $1.12$.
Again the relative errors are almost equal to the difference $\|(p-Pp)\big| _I \|_2$, and we obtain an almost perfect reconstruction of $Pp$ rather than $p$ itself.

In conclusion, although  chirps, in particular gravitational waves, are not in a variable bandwidth space, these spaces are convenient parametric models. In this particular application the structure of the signal is known in advance, therefore the use of $\pw $ and of  the associated sampling sets as  in \eqref{def:Gamma_n} appears to be a promising approach for their reconstructions.


\end{document}